\g@addto@macro\normalsize{%
	\setlength\abovedisplayskip{4pt}
	\setlength\belowdisplayskip{4pt}
	\setlength\abovedisplayshortskip{4pt}
	\setlength\belowdisplayshortskip{4pt}
}
\numberwithin{equation}{section}
\crefname{section}{Section}{Sections}
\crefname{subsection}{Subsection}{Subsections}
\crefname{condition}{Condition}{Conditions}
\crefname{hypothesis}{Hypothesis}{Conditions}
\crefname{assumption}{Assumption}{Assumptions}
\crefname{lemma}{Lemma}{Lemmas}
\crefname{definition}{Definition}{Definitions}
\newtheorem{theorem} {Theorem}[section]
\newtheorem{lemma}[theorem]{Lemma}
\newtheorem{counter example}[theorem]{Counter Example}
\newtheorem{remark}[theorem] {Remark}
\def\CC{{\rm \kern.24em \vrule width.02em height1.4ex depth-.05ex \kern-.26emC}}
\def\TagOnRight
\def\AA{{it I} \hskip-3pt{\tt A}}
\def\QQ{\rlap {\raise 0.4ex \hbox{$\scriptscriptstyle |$}} {\hskip -0.1em Q}}
\newcommand{\vo}{\vec{o}\@ifnextchar{^}{\,}{}}
\def\YYint#1#2#3{{\setbox0=\hbox{$#1{#2#3}{\iint}$}
		\vcenter{\hbox{$#2#3$}}\kern-.50\wd0}}
\def\XXint#1#2#3{{\setbox0=\hbox{$#1{#2#3}{\int}$}
		\vcenter{\hbox{$#2#3$}}\kern-.50\wd0}}
\def\namedlabel#1#2{\begingroup
	\def\@currentlabel{#2}%
	\label{#1}\endgroup
}
\newcommand{\rmh}[1]{\mathpalette{\raisem@th{#1}}}
\newcommand{\raisem@th}[3]{\hspace*{-1pt}\raisebox{#1}{$#2#3$}}
\newcounter{desccount}
\newcommand{\descref}[2]{\hyperref[#1]{\textnormal{\textcolor{black}{}\textcolor{blue}{ #2}\textcolor{black}{}}}}
\newcommand{\dref}[2]{\hyperref[#1]{\textcolor{black}{(}\textcolor{blue}{\bf #2}\textcolor{black}{)}}}
\newcommand{\be} {\begin{eqnarray}}
	\newcommand{\ee} {\end{eqnarray}}
\newcommand{\Bea} {\begin{eqnarray*}}
	\newcommand{\Eea} {\end{eqnarray*}}
\newcounter{whitney}
\newcounter{ineqcounter}
\def\ps@pprintTitle{%
	\let\@oddhead\@empty
	\let\@evenhead\@empty
	\def\@oddfoot{}%
	\let\@evenfoot\@oddfoot}
\newcommand{\refcheckize}[1]{%
	\expandafter\let\csname @@\string#1\endcsname#1%
	\expandafter\DeclareRobustCommand\csname relax\string#1\endcsname[1]{%
		\csname @@\string#1\endcsname{##1}\wrtusdrf{##1}}%
	\expandafter\let\expandafter#1\csname relax\string#1\endcsname
}
\newcommand{\mainsectionstyle}{%
	\renewcommand{\@secnumfont}{\bfseries}
	\renewcommand\section{\@startsection{section}{2}%
		\z@{.5\linespacing\@plus.7\linespacing}{-.5em}%
		{\normalfont\bfseries}}%
}
\xpatchcmd{\MaketitleBox}{\hrule}{}{}{}% remove first horizontal rule (above abstract)
\xpatchcmd{\MaketitleBox}{\hrule}{}{}{}% remoce second horizonral rule (below keywords)
\date{}
\title{A hyperbolic model for two-layer thin film flow with a perfectly soluble anti-surfactant}
\author{Rahul Barthwal}
\author{Christian Rohde}
\affil{\footnotesize Institute of Applied Analysis and Numerical Simulation, University of Stuttgart,\\
Pffafenwaldring 57, Stuttgart, Germany}
\begin{document}
\maketitle
\begin{abstract}
We consider the motion of a two-layer thin film that consists of two immiscible viscous fluids and is endowed with an anti-surfactant solute. The presence of such solute particles induces variations of the surface tension and interfacial stress driving a Marangoni-type flow. We first analyze a lubrication limit and derive one-dimensional evolution equations for film heights and solute concentrations. Then, under the assumption that the capillarity and diffusion effects are negligible and the solute is perfectly soluble, we obtain a conservative first-order system in terms of film heights and concentration gradients. This reduced system is found to be strictly hyperbolic for a certain set of states and to admit an entire class of entropy/entropy-flux pairs. We also provide a strictly convex entropy for the hyperbolic system. Thus, the well-posedness for the Cauchy problem is given. Moreover, the system is almost a Temple-class system, which allows to compute explicit solutions of the Riemann problem. The paper concludes with numerical experiments using a Godunov-type finite volume method, which relies on the exact Riemann solver.
\end{abstract}
{\textbf{Key words.} Thin film flow,  anti-surfactants,  lubrication approximation, hyperbolic conservation laws, entropy/entropy flux pairs, Riemann problems}
\medskip 
{\textbf{MSC codes.}  35L40 35L45 35L65 76A20  78M35

%\tableofcontents
\section{Introduction}
The study of thin film models is a 
rich and intricate research area. This holds in particular true if the dynamics is subject to the influence of soluble or insoluble substances, inducing variations of the surface tension. These flows find important applications in technical and environmental fields like coating technologies, thin film solar cell technologies, surfactant replacement therapies, etc.  (see e.g.\ \cite{craster2009dynamics, Kutteretal, matar2004rupture, o2002theory}). The surface-tension driven flow is usually referred to as a Marangoni flow, which has been extensively studied by several researchers in the last two decades (see e.g.\ \cite{billingham2006surface, gaver1990dynamics, jensen1992insoluble,  keller1983surface,  matar2004rupture, myers1998thin}). The variations in surface tension of the fluid can occur due to temperature changes or by the introduction of surfactants or anti-surfactants. While surfactants decrease the surface tension of the fluid, there are other solutes which behave in an opposite manner. When the molecules of a dissolved solute are expelled from the free surface of a solvent, the surface tension of the solution increases. Such solutes are usually referred to as ``anti-surfactants". Many salts, such as sodium chloride or short-chain alcohols, when added to water can be considered to act as anti-surfactants.

The large-scale dynamics of a pure one-layer thin film dynamics is typically governed by a first-order equation for the film height. Accounting for gravity effects, one is led to a scalar conservation law with non-convex fluxes, which allow for so-called overshoot solutions containing undercompressive shock waves, see \cite{cook2008shock}. Such equations have been analyzed from the 
theoretical point of view in e.g.\  \cite{bertozzi1999undercompressive} and with regard to numerics in e.g.\ \cite{Chalonsetal}.

More recently, Conn et al.~\cite{conn2017simple, conn2016fluid} proposed and analyzed a mathematical model for a two-dimensional anti-surfactant solution in a one-layer thin film flow.  They obtained a reduced one-dimensional hyperbolic form out of it assuming that the solute is perfectly soluble while neglecting diffusion and capillarity effects. In one spatial dimension, it reads for unknown film thickness $h=h(x, t)$  and bulk concentration gradient $b=b(x, t)$ as
\begin{equation}
\label{eq: temple}
 \dfrac{\partial h}{\partial t}+\dfrac{\partial }{\partial x}\left(\dfrac{h^2b}{2}\right)=0,\qquad
    \dfrac{\partial b}{\partial t}+\dfrac{\partial}{\partial x}\left(\dfrac{hb^2}{2}\right)=0.
\end{equation} 
Notably, the system \eqref{eq: temple} belongs to the Keyfitz-Kranzer class type hyperbolic systems \cite{temple1983systems}, for which the shock and rarefaction curves coincide and form a straight line in the state space. The system \eqref{eq: temple} was studied for classical wave interactions by Minhajul et al.~in \cite{sekhar2019stability} and for nonclassical wave interactions by Sen and Raja Sekhar in \cite{sen2020delta}. Barthwal et al.~\cite{barthwal2023construction} were able to extend the aforementioned model to its two-dimensional counterpart and obtained self-similar and non-self-similar solutions for the Riemann problems \cite{barthwal2022two, barthwal2023construction}, which were further explored to solve a three-quadrant Riemann problem by Pandey et al.~\cite{anamika2024Riemann}. Barthwal et al. \cite{barthwal2026existence} recently developed the global existence of weak entropy solutions for a class of $2\times 2$ hyperbolic systems, including the system \eqref{eq: temple}, using a novel viscous approximation.

Most of the mathematical models related to thin film flows are restricted to either single-layer flows or to homogeneous fluids. To the best of the authors' knowledge, there are not many mathematical treatises of models related to two-layer thin film flows under the influence of a surfactant or anti-surfactant. However,  there is the work of Br\"ull \cite{bruell2016modeling} who established local well-posedness for two-layer thin film flow under the influence of an insoluble surfactant.  
The evolution equations accounting for the presence of anti-surfactants are different from the evolution equations for the case of surfactants-induced surface tension variations. In the case of insoluble surfactants, one does not require separate concentration evolution equations in each layer, while in the case of anti-surfactants, it is necessary to obtain the evolution equations of concentration in each layer.

In this paper, we consider such a thin film flow model accounting for an anti-surfactant solute. In the first part of the article, we formulate a one-dimensional lubrication approximation for the evolution of the two-layer thin film flow with a perfectly soluble anti-surfactant solute. For the full-dimensional two-layer flow, we start with two immiscible, incompressible, and viscous fluids lying on top of each other on a solid substrate. One representative example of such two-layer flows is a layer of water lying over an oil layer. Based on the full Navier-Stokes equations and the surfactant transport equations, we derive as first main result the evolution equations for film thickness and bulk concentration in each layer using the thin film approximation for a selected flow regime. This leads to a fourth-order system of governing equations that generalizes  previously found descriptions, e.g.~for one-layer configurations as in \cite{conn2016fluid}.
Moreover, by assuming that the capillarity and diffusion effects are negligible, we obtain as the second main result a novel reduced ($4\times 4$)-system of first-order conservation laws in terms of the dimensionless film thickness in the lower and the upper layer and the corresponding concentration gradients, see \eqref{eq: Main_system} in Section \ref{sec: 3}. We show that the reduced system is strictly hyperbolic in a subset of the state space with positive concentration gradients. We consider this property to be essential also for the well-posedness of the system with finite capillarity and diffusion numbers.  In the case of only one thin film layer, the system \eqref{eq: Main_system} collapses to system \eqref{eq: temple}.\\ 
The reduced system \eqref{eq: Main_system} has a remarkable structure. It almost belongs to the Temple class, for which exact Riemann solvers are known, and global existence of solutions has been achieved \cite{bressan2000convergence, MR892021}. 
Our novel system has properties quite similar to a Temple-class system, as it can be decoupled along its Riemann invariants and also has a semi-triangular structure. However, it still does not belong to the Temple class because the relations for shock and rarefaction waves in the second characteristic field differ from each other and are not represented by straight lines in the state space. This makes the solution theory more complicated than its one-layer counterpart.  Also, the family of Lax curves is more intricate in the two-layer case,  and requires the solution of nonlinear algebraic relations. In the one-layer system, the Lax curves can be connected with each other in an affine manner, which is not the case here.

In the second part of the article, we succeed to show local well-posedness for the Cauchy problem for the reduced system. Notably, we can identify a whole family of entropy/entropy-flux pairs for the reduced system. We also find a strictly convex entropy such that the reduced system belongs to the class of Friedrichs-symmetrizable systems. This becomes possible because we were able to identify a novel set of Riemann invariants, which forms a coordinate system for the hyperbolic system. A similar approach for a ($2\times 2$)-triangular system of conservation laws was utilized by Andreianov et al.~\cite{andreianov2015attainable}. Of course, two-layer thin film models cover more physics than the one-layer versions, but the rich mathematical structure of the reduced system is what makes this paper more interesting. Such hyperbolic systems with a class of entropies and a coordinate system are referred as a rich class of hyperbolic systems as pointed out by Serre \cite{MR1087091}. We refer the interested reader to the classical paper by Conlon and Liu \cite{conlon1981admissibility} for more details on Riemann invariants as a coordinate system. Finding such a new instance of a non-linear hyperbolic system with three or more equations is interesting from the point of view of the theory of hyperbolic systems. We note that hyperbolic systems with Riemann invariants as a coordinate system can admit global well-posedness even for weak solutions and the construction of accurate and high-order numerical solvers, see e.g. \cite{barthwal2025generalized, serre1999systems, MR4732267}. 

The findings on the Riemann invariants enable us to provide all rarefaction-wave curves and Rankine-Hugoniot loci. Thus, we can determine the exact solutions to the Riemann problem for the reduced hyperbolic system \eqref{eq: Main_system} by solving a set of nonlinear algebraic equations in the state space. In this way, we are automatically led to a finite volume scheme with Godunov flux. The scheme is validated, and some numerical simulations are shown. We compare our results with a Riemann solver free Lax-Friedrichs scheme, which has been established as a prototype stable and convergent scheme for many one-dimensional hyperbolic systems. Comparison with the Lax-Friedrichs scheme validates the accuracy and robustness of the constructed Riemann solver. 

The remainder of the article is organized as follows. Section \ref{sec:2} is devoted to developing the mathematical model for thin film flow and the evolution equations of film thickness and bulk concentration in each layer. In Section \ref{sec: 3}, we obtain the reduced first-order system, show that the reduced model is strictly hyperbolic,  and discuss its basic properties. Section \ref{sec: 4} is devoted to derive entropy/entropy-flux pairs with convex entropy and hence to verify the local well-posedness of the Cauchy problem for the reduced hyperbolic system. In Section \ref{sec: 5}, we discuss a Riemann problem for the reduced hyperbolic system and supplement it with some numerical experiments in Section \ref{sec: 6}. Concluding remarks are provided in Section \ref{sec: 7}.
\begin{figure}
    \centering
    \includegraphics[width=0.7\linewidth]{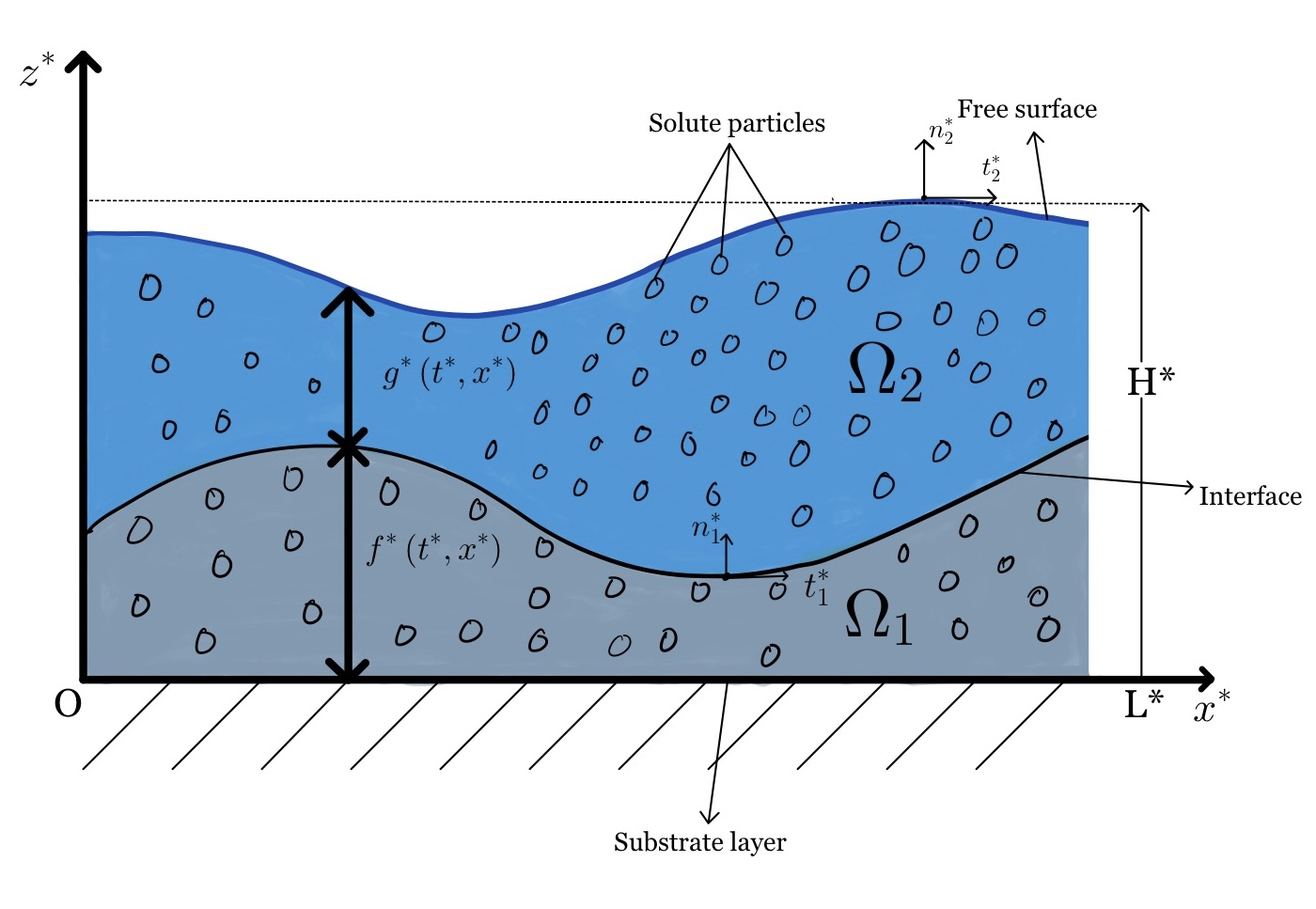}
    \caption{{Geometry of the two-layer thin film flow under the influence of an anti-surfactant.}}
    \label{fig:thin-film}
\end{figure}
\section{A two-layer  thin film flow model with perfectly soluble anti-surfactant\label{sec:2}}
\subsection{Governing equations and boundary conditions}\label{subsec: 2.1}
We consider the dynamics of two viscous, incompressible, Newtonian, and immiscible fluid films occupying the two-dimensional,
 time-dependent open sets
 $\Omega_1$ and  $\Omega_2$, see Figure \ref{fig:thin-film}.    The lower region $\Omega_1$ is bounded from below
by a horizontal impermeable and fixed bottom, whereas the upper region $\Omega_2$ is bounded from above by a free surface. The regions are separated by an impermeable fluid-fluid interfacial curve. We assume that both films contain a perfectly soluble solute, which increases the surface tension (anti-surfactant solute). Moreover, it is assumed that the interfacial stress separating the fluids is independent of external influences and that the external flow dynamics does not affect the film evolution. All quantities in this section are dimensional. \\ %In particular, it is assumed that the velocity and the pressure in the ambient domain vanish. 
 Let $L^*$ be the length of the film and denote the characteristic film height by $H^*$.  For our study, we assume that the film heights are small as compared to the film lengths, i.e., for the ratio we have $H^*/L^*=\epsilon$ with $\epsilon\ll 1$. Thus, it is justified to neglect the effects of gravity.
 We denote by $x^*$ and $z^*$ the horizontal and vertical directions, respectively. Moreover, the two film heights are supposed to be given for time $t^*$ by the graphs of functions $f^*=f( x^*, t^*)$ and $g^*=g^* (x^*, t^*$) such that the interface is located at $z^*=f^*( x^*, t^*)$, and the free surface is located at $z^*=(f^*+g^*)( x^*, t^*)$. The bottom layer is located at $z^*=0$. 
The outward unit normal $ {\mathbf{\hat{n}_1}}^* $ and unit  tangent vector $ \mathbf{\hat{t}_1}^*$  of the interface are given for $ n_1 = \sqrt{1+(\partial_{x^*} f^*)^2}$ by 
\begin{align*}
{\mathbf{\hat{n}_1}}^*= n^{-1}_1 \left(-\partial_{x^*} f^*, 1\right)^\top
\text{ and }\, 
\mathbf{\hat{t}_1}^*=   n^{-1}_1   \left(1,\partial_{x^*} f^*\right)^\top.
\end{align*}
Analogously, with $n_2=\sqrt{1+\left(\partial_{x^*} (f^*+g^*)\right)^{2}}$ we define for the  free surface 
\begin{align*}
\mathbf{\hat{n}_2^*}=n_2^{-1}\left(-\partial_{x^*} (f^*+g^*),  1\right)^\top
\text{ and }\,
\mathbf{\hat{t}_2}^*=n_2^{-1}\left(1,\partial_{x^*} (f^*+g^*)\right)^\top.
\end{align*}
The dynamics of the two immiscible fluids is governed by the Navier-Stokes equations. The transport of bulk concentrations of the anti-surfactant adheres to advection-diffusion equations. As for the geometrical quantities, we mark the physical quantities in $\Omega_i$ by the subscript $i\in \{1, 2\}$. The bulk diffusion coefficient of the fluid in $\Omega_i$ is $D_{b_i}^*$, the density is $\rho_i^*$, and the viscosity is $\mu_i$, which are all assumed to be constants for each layer. The velocity vector is denoted by $\mathbf{u}_i^*=(u_i, w_i)^\top$, the pressure by $p_i^*$ and the bulk concentration by $c_i^*$. Then the corresponding equations of motion read for $i\in \{ 1, 2\}$ as \cite{picardo2015modelling}
\begin{align}\label{eq: 2.1main}
%\begin{cases}
    \hspace{5.3 cm} \nabla^*. \mathbf{u}_i^*&=0, \nonumber\\
    \rho_i^*\left(\partial_{t^*} \mathbf{u}_i^*+\mathbf{u}_i^*. \nabla^* \mathbf{u}_i^*\right)+\nabla^* p_i^*-\mu_i^* \nabla^{*2} \mathbf{u}_i^*&=0,\vspace{0.15 cm}\nonumber\\
 \hspace{3.8 cm} \partial_{t^*} c_i^*+\mathbf{u}_i^*.\nabla^* c_i^*&=D_{b_i}^* \nabla^{*2} c_i^*,
\end{align}
where $\nabla^*=\left(\partial_{ x^*},  \partial_{ z^*}\right)$ denotes the gradient vector. 
%flux moving from $\Omega_2$ to the free surface. 
%Following \cite{conn2016fluid} we choose the linear expression 
%\begin{align}\label{Jflux}
 %   J^*=k_+^* \eta^*_2c_2^*-k_-^*s^*.
%\end{align}
%Here, $\eta^*_2$ is the notional thickness of the free surface. \hl{The numbers $k_+^*$ and $k_-^*$ determine the rate of particle adsorption to, and desorption from the free surface.  With the ratio $K=  k_+^*/ k_-^*$, the regime $K<1$ ($K>1$) corresponds to an anti-surfactant (surfactant) solute such that more and more particles accumulate in the bulk (on the surface). The choice $K=0$ is denoted as a perfectly soluble anti-surfactant, see Section} \ref{sec: 3}.

The governing equations \eqref{eq: 2.1main} are to be supplemented with boundary conditions. Since the bottom surface of the substrate at $\{z^*=0\}$ is impermeable and stationary, we impose 
\begin{align}
    \mathbf{u_1^*}=\mathbf{0} ~~~\mathrm{on}~~\{z^*=\mathbf{0}\}.
\end{align}

We further assume that the velocity field is continuous across the  interface $\{z^*=f^*\}$, which implies 
\begin{align}
\mathbf{u_1^*}=\mathbf{u_2^*}~~\mathrm{at}~~ \{z^*=f^*\}.
\end{align} 

The normal stress balance equations at the interface $\{z^*=f^*\}$ and the free surface $\{z^*=f^*+g^*\}$ are
\begin{align}\label{eq: 9}
%\begin{cases}
    \mathbf{[T_1^* -T_2^*]\hat{n}_1^*\cdot \hat{n}_1^*}&=\sigma_1^* \kappa_1^* ~~~\mathrm{on}~~\{z^*=f^*\},\\
    \hspace{1.1 cm}\mathbf{T_2^*\hat{n}_2^*\cdot \hat{n}_2^*}&=\sigma_2^* \kappa_2^* ~~~\mathrm{on}~~\{z^*=f^*+g^*\},\nonumber
 %\end{cases}   
\end{align}
where $\mathbf{T_i^*}=-p_i^* \mathbf{I}+\mu_i^*(\nabla^* \mathbf{u^*_i}+(\nabla^* \mathbf{u^*_i})^T)$ denotes the stress tensor in the $i${th} layer, $\sigma_1^*$ is the interfacial tension and $\sigma_2^*$ is the surface tension. We discuss the precise forms of $\sigma_1^*=\sigma^*_1(c_1^*)$ and $\sigma_2^*= \sigma_2^*(c^*_2)$  in \eqref{eq: 2.14main} and \eqref{eq: 2.15main} below. Further, \[
\kappa^*_1=\dfrac{\partial_{x^*}^2 f^*}{(1+(\partial_{x^*}^2 f^*)^2)^{\frac{3}{2}}}   \text{ and }\kappa_2^*=\dfrac{\partial_{x^*}^2 (f^*+g^*)}{(1+(\partial_{x^*}^2 (f^*+g^*))^2)^{\frac{3}{2}}}\] 
are the mean curvatures of the interface and the free surface, respectively.

\begin{comment}
stress balance equations at the interfaces $\{z^*=f^*\}$ and $\{z^*=f^*+g^*\}$ are given by
\begin{align}\label{eq: 6}
    %\begin{cases}
        \mathbf{[T_1^* -T_2^*]\hat{n}_1^*}&=\sigma_1^* \kappa_1^* \mathbf{\hat{n}_1^*}+\nabla_s^* \sigma_1^*~~~\mathrm{on}~~\{z^*=f^*\},\\
        \hspace{1 cm}\mathbf{T_2^*\hat{n}_2^*}&=\sigma_2^* \kappa_2^* \mathbf{\hat{n}_2^*}+\nabla_s^* \sigma_2^*~~~\mathrm{on}~~\{z^*=f^*+g^*\},\nonumber
   % \end{cases}
\end{align}

In view of \eqref{eq: 6}, we obtain the
\end{comment}

Similarly, the tangential stress balance equations are given by
\begin{align}\label{eq: 10}
%\begin{cases}
    \mathbf{[T_1^* -T_2^*]\hat{n}_1^*\cdot \hat{t}_1^*}&=   n^{-1}_1 \big(  {\partial_{x^*} \sigma_1^*+\partial_{z^*} \sigma_1^*\partial_{x^*} f^*} \big)%{\sqrt{1+(\partial_{x^*} f^*)^2}} 
 ~ \qquad\qquad\mathrm{on}~~\{z^*=f^*\},\vspace{0.1 cm}\\
    \hspace{1.1 cm}\mathbf{T_2^*\hat{n}_2^*\cdot \hat{t}_2^*}&= n^{-1}_2 \big( {\partial_{x^*} \sigma_2^*+\partial_{z^*} \sigma_2^*\partial_{x^*} (f^*+g^*)}\big) 
    ~~~~\mathrm{on}~~\{z^*=f^*+g^*\}.\nonumber
 %\end{cases}   
\end{align}

Further, at the interface $\{z^*=f^*\}$ and the free surface $\{z^*=f^*+g^*\}$, kinematic boundary conditions are implemented, given by
\begin{align}
%\begin{cases}
\partial_{t^*} f^*+u_1^*\partial_{x^*} f^*&=w_1^*~~~\mathrm{on}~~\{z^*=f^*\},\vspace{0.3 cm}\\
\partial_{t^*} g^*+u_2^*\partial_{x^*} g^*&=w_2^*~~~\mathrm{on}~~\{z^*=f^*+g^*\}.\nonumber
%\end{cases}
\end{align}

Since the bottom substrate is assumed to be impermeable, the bulk concentration in $\Omega_1$ must obey the no-flux condition given by
\begin{align}
\partial_{z^*} c_1^*=0~~~ \mathrm{at} ~\{z^*=0\}.
\end{align}
At the interface $\{z=f\}$, we impose the boundary conditions
\begin{align}\label{eq: interface}
    %\begin{cases}
     \nabla^* c_1^*.\mathbf{\hat{n}_1^*}&=\nabla^* c_2^*.\mathbf{\hat{n}_1^*}=0,
    %\end{cases}
\end{align}
which implies that there is no solute exchange along the normal direction of the interface.
\begin{remark}\label{interfacial_remark}
The solute boundary conditions \eqref{eq: interface} at the interface are a consequence  of the choice which has been made in \cite{ kalogirou2019role, picardo2016solutal}, i.e.,
\begin{align*}
    %\begin{cases}
        D_{b_1}^* \nabla^* c_1^*.\mathbf{\hat{n}_1^*}&=D_{b_2}^* \nabla^* c_2^*.\mathbf{\hat{n}_1^*},
        \,\, c_1^*=K_{eq}c_2^*.\nonumber
    %\end{cases}
\end{align*}
Here,  $K_{eq}$ is a distribution coefficient, which measures the distribution of solute particles in each layer when equilibrium is reached. Under the non-degeneracy condition $K_{eq}D_{b_1}-D_{b_2}\neq 0$ we get \eqref{eq: interface}.  For physical background, we refer the interested reader to  \cite{picardo2015modelling}. 
\end{remark}
Finally, since the solute is assumed to be perfectly soluble, the bulk concentration in $\Omega_2$ satisfies the total flux balance at the free surface $\{z^*=f^*+g^*\}$ given by 
\begin{align}\label{flux_free_surface}
-D_{b_2}^* \nabla^* c_2^*.\mathbf{\hat{n}_2^*}=0,
\end{align}

Now we are ready to define a constitutive relationship for  the interfacial tension $\sigma_1^*$ and the surface tension $\sigma_2^*$. Following again \cite{picardo2016solutal}, we choose a linear interfacial tension law at the interface. We assign to the interface a  thickness on each side 
denoted by $\eta_{11}^*$ and $\eta_{12}^*$, respectively such that the interfacial concentration  of  antisurfactants (negative adsorption) can be  approximated by
\[
C_{\rm{interface}}^*:=-\eta_{11}^* c_1^*-\eta_{12}^* c_2^*. 
\]
If we assume equilibrium (see Remark \ref{interfacial_remark}) we can  express $C_{\rm{interface}}^*$ in terms of $c^*_1$ only, i.e., $C_{\rm{interface}}^*=-\eta_{11}^* c_1^*- \eta_{12}K_{\rm eq}^{-1} c_1^*$. 
Using the Gibbs law and introducing the effective interfacial thickness $\eta_1^*= \eta_{11}^*+ \eta_{12}^*{K^{-1}_{eq}} $, the interfacial tension law takes the form  
\begin{align}\label{eq: 2.14main}
\sigma_1^*=\sigma_0^*+R^*T^* \eta_1^* c_1^*.
\end{align}
In \eqref{eq: 2.14main}, $R^*$ is the ideal gas constant, $T^*$ is the constant temperature, $\sigma^*_0$ is the interfacial tension of the pure solvent, and $\eta_1^* c_1^*$ is the effective interfacial concentration. 

For the free surface, we also use a  linear surface-tension law from \cite{barthwal2023construction,conn2016fluid} and get with analogous notations
\begin{align}\label{eq: 2.15main}
\sigma_2^*=\sigma_{s}^*+R^*T^*\eta_2^* c_2^*,
\end{align}
where $\sigma_s^*$ is the surface tension of pure solvent at the upper layer and $\eta_2^*$ is the assigned thickness of the free surface.
\begin{remark}
(i) To derive the surface tension relations \eqref{eq: 2.14main}  and \eqref{eq: 2.15main} we have implicitly used the assumption that the anti-surfactant is perfectly soluble in the interface and the surface. For more  general laws we refer to the literature  (cf.~eq.~(6) in  \cite{conn2016fluid}). These treat cases such that 
%\vspace{-0.2 cm}
%\begin{align}\label{eq: 2.15main_new}
%\sigma_2^*=\sigma_{s}^*+R^*T^*(1-K)\eta^*_2 \theta c_2^*,
%\end{align}
%with the choice $\theta = (1-K)^{-1}$, where $K$ denotes 
the ratio of the rate of particle adsorption to, and desorption from the interface or the free surface deviate from one.\\ 
%or the inter. For a perfectly soluble anti-surfactant, one can take $K=0$; see \cite{barthwal2023construction, conn2016fluid}.\\
(ii) Note that we employ simplified interfacial conditions in this article. In particular, we assume that the interfacial trace concentrations coincide with their corresponding bulk values, i.e., $c_{i, \rm{int}}^*=c_i^*$ for $i\in \{1, 2\}$, which is true in the leading order, see \eqref{c_expansion} below. Under this approximation and with  equilibrium boundary condition $c_1^*=K_{eq}c_2^*$, the interfacial stress law becomes a function of $c_1^*$ only. In a more realistic setting, the interfacial stress generally depends on the coupled interfacial composition of both layers. In particular, the trace values of interfacial concentrations need not to be equal to the bulk concentrations, and must be determined from interfacial equilibrium and mass-transfer constraints. Retaining these effects would introduce additional interfacial unknowns and a two-way coupling between the transport problems in the two fluids, substantially enriching and complicating the resulting dynamics. However, for the purpose of this article, we restrict attention to the simplified boundary conditions described above, which capture the leading-order influence of solute variations on interfacial stresses while keeping the model analytically and computationally tractable.
\end{remark}
\subsection{Non-dimensionalization}
We non-dimensionalize the system \eqref{eq: 2.1main} using scalings in terms of the ratio  $\epsilon$ of characteristic height $H^*$ and characteristic length $L^*$. Keeping in mind that the dimensional values are represented with stars, whereas non-dimensional quantities are represented without,  we consider  for $i\in\{1, 2\}$
\begin{align*}
&x=\dfrac{x^*}{L^*},~~z=\dfrac{z^*}{H^*},~~f=\dfrac{f^*}{H^*},~~g=\dfrac{g^*}{H^*},~~u_i=\dfrac{u_i^*}{U^*},~~w_i=\dfrac{w_i^*}{\epsilon U^*},~~t=\dfrac{L^*}{U^*}t^*,\\
&p_i=\dfrac{\epsilon^2 L^*}{\mu_i^* U^*}p_i^*,  ~~\sigma_1=\dfrac{\sigma_1^*}{\sigma_{0}^*},~~\sigma_2=\dfrac{\sigma_2^*}{\sigma_{s}^*},~~c_1=\dfrac{c_1^*}{C_1^*},~~c_2=\dfrac{c_2^*}{C_2^*}.
\end{align*}
Here, $U^*$ is the characteristic velocity scale, and $C_i^*, ~i\in \{1, 2\}$ is the typical bulk concentration scale at equilibrium in each layer. We introduce the non-dimensional density $\rho_i^*$ and viscosity $\mu^*_i$.
The re-scaling of physical quantities follows the scenario that has been outlined in \cite{barthwal2023construction, conn2017stability}: we choose 
\[
H^*= O(\epsilon^{-1}), \, L^* = O(\epsilon^{-2}),  U^*=O(\epsilon^3), C^*_i = O(\epsilon^{-2}), 
\] and 
$\rho_i^*,\, \mu^*_i = O(1)$. 
In terms of the non-dimensional variables, the governing equations in component form using the $(x, z)$-Cartesian coordinate system for $i \in \{1, 2\}$ become then
\begin{align}\label{eq: 8}
    %\begin{cases}
   ~~~~~~~~~\partial_x u_i+\partial_z w_i&=0,\nonumber\\
    \epsilon^2\left(\partial_t u_i+u_i\partial_x u_i+w_i\partial_z u_i\right)&=(\mathrm{Re_i})^{-1}\left(-\partial_x p_i+\epsilon^2 \partial_x^2 u_i+\partial_z^2 u_i\right),\vspace{0.2 cm} \nonumber\\
    \epsilon^4\left(\partial_t w_i+u_i\partial_x w_i+w_i\partial_z w_i\right)&=(\mathrm{Re_i})^{-1}\left(-\partial_z p_i+\epsilon^4 \partial_x^2 w_i+\epsilon^2\partial_z^2 w_i\right),\vspace{0.2 cm}\\
    \epsilon^2\left(\partial_t c_i+u_i\partial_x c_i+w_i\partial_z c_i\right)&=(\mathrm{P_{b_i}})^{-1}\left(\epsilon^2\partial_x^2 c_i+\partial_z^2 c_i\right).
\end{align}
In \eqref{eq: 8}, $\mathrm{Re_i}=(\rho_i^* U^* L^*)/\mu_i^*$ is the Reynolds number of the $i$th film and $\mathrm{P_{b_i}}=(U^*L^*)/D_{b_i}^*$ is the $i$th bulk P\'{e}clet number. For  $i\in \{1, 2\}$, we obtain with $D^*_{b_i} = O(\epsilon)$  the following scalings 
\[\mathrm{Re_i}=O(\epsilon), \quad\mathrm{P_{b_i}}=O(1).\]
For more physical details on these scalings, we refer the interested reader also to \cite{conn2017stability}.

Moreover, we consider the following asymptotic expansion for $c_i$ \cite{barthwal2023construction, conn2017stability}
\begin{align}\label{eq: asym}
c_i (x, z, t)=c_i^0(x, z, t)+\epsilon^2 c_i^1(x, z, t)+O(\epsilon^4).
\end{align}
Then by neglecting terms of order $\epsilon^2$, we obtain from \eqref{eq: 8} the reduced equations for $i \in \{1, 2\}$ given by 
\begin{subequations}\label{eq:reduced}
\begin{align}
    \partial_x u_i+\partial_z w_i&=0, \label{eq: 2.3a} \\
    \partial_x p_i&=\partial_z^2 u_i,\label{eq: 2.3b}\\
\partial_z p_i&=0, \label{eq: 2.3d}\\
    \partial_z^2 c_i^0&=0, \label{eq: 2.3e}\\
    \partial_t c_i^0+u_i\partial_x c_i^0&=(\mathrm{P_{b_i}})^{-1}\left(\partial_x^2 c_i^0+\partial_z^2 c_i^1\right). \label{eq: 2.3e_new} 
\end{align}
\end{subequations}
Note that \eqref{eq: 2.3e_new} still contains the second-order term  $c^1_i$ of the expansion of $c_i$ in \eqref{eq: asym}.
We proceed with the non-dimensionalization of the boundary conditions. In view of \eqref{eq: 2.3e}, we observe that $\partial_z c_i^0$ is constant for all $z$. In particular, due to the boundary condition at the substrate, we have
\begin{align}
  \partial_z c_1^0=0~\mathrm{at}~\{z=0\}. 
\end{align}
Due to the interface boundary condition $\eqref{eq: interface}$, and by neglecting second-order terms, we get $ \partial_z c_1^0=\partial_z c_2^0=0$ at $\{z=f\}$. Further the flux balance at free surface \eqref{flux_free_surface} implies that $ \partial_z c_2^0=0$ at $\{z=f+g\}$. Altogether, the bulk concentration $c_i$ is independent of $z$ to leading order $O(\epsilon^2)$, i.e., the leading order bulk concentration is uniform across the layer and is therefore given by some function $c_i=c_i^0(x,t)$. The asymptotic expansion of $c_i$ reduces to
\begin{align}\label{c_expansion}
c_i (x, z, t)=c_i^0(x, t)+\epsilon^2 c_i^1(x, z, t)+O(\epsilon^4).
\end{align}
Moreover, the substrate and flux boundary conditions for $c_i^1$ become
\begin{align}\label{eq: 2.7}
   \partial_z c_1^1\big|_{z=0}&=0,~~\qquad\qquad \partial_z c_1^1\big|_{z=f}\quad=\partial_x c_1^0~\partial_x f,\\
\partial_z c_2^1\big|_{z=f}&=\partial_x c_2^0~\partial_x f,~\quad\partial_z c_2^1\big|_{z=f+g}=\partial_x (f+g)~\partial_x c_2^0.\label{eq: 2.8}
\end{align}
Further, using \eqref{eq: 2.14main} and \eqref{eq: 2.15main}, the scaled interface tension coefficient after non-dimensionalization is $\sigma_1=1+\alpha_1 c_1^0$ and the surface tension coefficient is $\sigma_2=1+\alpha_2 c_2$. Scaling  $C^*_i = O(\epsilon^{-2})$  and $ \eta_1^*=O(\epsilon^4)$  we are led to  $\alpha_1=R^* T^* \eta_1^* C_1^*/\sigma_0^* =O(\epsilon^2) $ and $\alpha_2=R^* T^* \eta_2^* C_2^*/\sigma_s^*=O(\epsilon^2)$. Then, neglecting the higher-order term and simplifying, we obtain the  non-dimensional boundary conditions
\begin{subequations}\label{eq: 2.17main}
\begin{alignat}{2}
u_1 &= 0, \quad w_1=0,                                      &\quad \mathrm{on}~\{z=0\},       \label{eq: 2.4a}\\
u_1 &= u_2,    \quad w_1=w_2,                                  &\quad \mathrm{on}~\{z=f\},       \label{eq: 2.4b}\\
\partial_t f + u_1\,\partial_x f
    &= w_1,                                    &\quad \mathrm{on}~\{z=f\},       \label{eq: 2.4c}\\
\partial_t g + u_2\,\partial_x g
    &= w_2,                                    &\quad \mathrm{on}~\{z=f+g\},     \label{eq: 2.4d}\\
\mu\,p_2 - p_1
    &= (\mathrm{Ca}_1)^{-1}\partial_x^2 f,    &\quad \mathrm{on}~\{z=f\},       \label{eq: 2.4e}\\
-\,p_2
    &= (\mathrm{Ca}_2)^{-1}\partial_x^2(f+g), &\quad \mathrm{on}~\{z=f+g\},     \label{eq: 2.4f}\\
\partial_z u_1
    &= \mu\,\partial_z u_2 + \mathrm{Ma}_1\,\partial_x c_1^0,
                                              &\quad \mathrm{on}~\{z=f\},       \label{eq: 2.4g}\\
\partial_z u_2
    &= \,\mathrm{Ma}_2\bigl(\partial_xc_2
       + \partial_z c_2\,\partial_x(f+g)\bigr),
                                              &\quad \mathrm{on}~\{z=f+g\},     \label{eq: 2.4h}\\
\partial_z c_1^1
    &= 0,                                      &\quad \mathrm{on}~\{z=0\},       \label{eq: 2.4i}\\
\partial_z c_1^1
    &= \partial_x c_1^0\,\partial_x f,        &\quad \mathrm{on}~\{z=f\},       \label{eq: 2.4j}\\
\partial_z c_2^1
    &= \partial_x c_2^0\,\partial_x f,        &\quad \mathrm{on}~\{z=f\},       \label{eq: 2.4k1}\\
\partial_z c_2^1
    &= \partial_x(f+g)\,\partial_x c_2^0,          &\quad \mathrm{on}~\{z=f+g\}.     \label{eq: 2.4k2}
\end{alignat}
\end{subequations}

In \eqref{eq: 2.17main}, $\mu=\mu_2^*/\mu_1^*$ denotes the ratio of the viscosities of two fluids, $\mathrm{Ma_1}=(\epsilon R^* T^* \eta_1^* C_1^*)/\mu_1^* U^*$ and $\mathrm{Ma_2}=(\epsilon R^* T^* \eta_2^* C_2^*)/\mu_2^* U^*$ denote Marangoni numbers and $\mathrm{Ca_1}=(\mu_1^* U^*)/\epsilon^3 \sigma_{0}^*$, $\mathrm{Ca_2}=(\mu_2^* U^*)/\epsilon^3 \sigma_{s}^*$ are the capillarity numbers. Note that all these characteristic numbers are of order one with the scaling choices made before. 
\begin{comment}
to retain both mean surface tension and surface tension gradient effects at leading order, or in other words, we require that the variation of surface tension and interfacial tension is small 
\end{comment}
\subsection{Evolution equation for the film heights}
Using the reduced equations of motion \eqref{eq:reduced} and the reduced boundary conditions \eqref{eq: 2.17main}, the velocity profiles $u_1$ and $u_2$ are given by
\begin{align}
   &u_1=\dfrac{\mu \partial_{x}^3(f+g)}{\mathrm{Ca_2}}z\left((f+g)-\dfrac{z}{2}\right)+\dfrac{\partial_{x}^3 f}{\mathrm{Ca_1}}z\left(f-\dfrac{z}{2}\right)+\mu z\mathrm{Ma_2}\partial_x c_2 +z\mathrm{Ma_1} \partial_x c_1^0,\label{eq: 2.18main} \\
&u_2=\dfrac{\partial_{x}^3(f+g)}{\mathrm{Ca_2}}\bigg[f\left(g+\dfrac{f}{2}\right)(\mu-1)+z\left((f+g)-\dfrac{z}{2}\right)\bigg]+\dfrac{\partial_{x}^3 f}{\mathrm{Ca_1}}\dfrac{f^2}{2}\label{eq: 2.6a_velocity_2}\\
&\hspace{4 cm}+\mathrm{Ma_1}  \partial_x c_1^0 f+\mathrm{Ma_2}\partial_x c_2\left(f(\mu-1)+z\right).\nonumber
\end{align}
The detailed calculations can be found in Appendix \ref{sec: appendix}. The evolution equations for the film thickness can then be obtained using the kinematic boundary conditions \eqref{eq: 2.4c}-\eqref{eq: 2.4d} and the continuity equations from \eqref{eq: 8}, and write as
\begin{align}\label{eq: 2.24main}
   \partial_t f&+\partial_x \left(\dfrac{f^2(\mathrm{Ma_1} \partial_x c_1^0+\mu \mathrm{Ma_2}\partial_x c_2^0)}{2}\right)=\partial_x Q_1,\\
    \partial_t g&+\partial_x\left(fg\mathrm{Ma_1} \partial_x c_1^0+\mathrm{Ma_2}\partial_x c_2^0\left(g\left(\mu f+\dfrac{g}{2}\right)\right)\right)=\partial_x Q_2,\label{eq: 2.24second}
\end{align}
with the third-order derivative terms  $Q_1$ and $Q_2$ 
given by
\begin{align}\label{eq: 2.22main}
    &Q_1=-\dfrac{\mu \partial_{x}^3(f+g)}{\mathrm{Ca_2}}\dfrac{f^2}{2}\left(\dfrac{2f}{3}+g\right)-\bigg[\dfrac{\partial_{x}^3 f}{\mathrm{Ca_1}}\bigg]\dfrac{f^3}{3}\\
&Q_2=-\dfrac{\partial_{x}^3(f+g)}{\mathrm{Ca_2}}\bigg[\left(\dfrac{(\mu-1)(f^2 g+2fg^2)}{2}\right)+\dfrac{(f+g)^3}{3}-\dfrac{f^2(2f+3g)}{6}\bigg]-\dfrac{f^2 g~ \partial_{x}^3 f}{2~\mathrm{Ca_1}}.\label{eq: 2.23main}
\end{align}
We note that both terms $Q_1$ and $Q_2$ are inversely proportional to the respective capillarity numbers  $\mathrm{Ca_1}$   and $\mathrm{Ca_2}$.
\subsection{Evolution equations for the bulk concentrations}
To obtain the  evolution equation for %$i\in \{1, 2\}$ 
the bulk concentration $c_1^0$, we integrate \eqref{eq: 2.3e_new} with respect to $z$ from $\{z=0\}$ to $\{z=f\}$ with boundary conditions \eqref{eq: 2.4i} and \eqref{eq: 2.4j}, which gives
\begin{align}\label{eq: 2.34main}
  f\partial_t c_1^0+\dfrac{f^2 \partial_x c_1^0}{2}\left(\mathrm{Ma_1} \partial_x c_1^0+\mu \mathrm{Ma_2}\partial_x c_2^0\right)=(\mathrm{P_{b_1}})^{-1}\left(f\partial_x^2 c_1^0+\partial_x c_1^0~\partial_x f\right)+Q_1 \partial_x c_1^0,\nonumber\\
 f\partial_t c_1^0+\dfrac{f^2 \partial_x c_1^0}{2}\left(\mathrm{Ma_1} \partial_x c_1^0+\mu \mathrm{Ma_2}\partial_x c_2^0\right)=(\mathrm{P_{b_1}})^{-1}\left(\partial_x(f\partial_x c_1^0)\right)+ Q_1\partial_x c_1^0,
\end{align}
where $Q_1(x, t)$ is defined in \eqref{eq: 2.22main}.

In a similar fashion, we can find the evolution equation of $c_2^0$ by integrating \eqref{eq: 2.3e_new} from $\{z=f\}$ to $\{z=f+g\}$ and using the boundary conditions \eqref{eq: 2.4k1} and \eqref{eq: 2.4k2}. The evolution equation for $c_2^0$ is given by
\begin{align}\label{eq: 2.35main}
  g\partial_t c_2^0&+\left(\mathrm{Ma_1} \partial_x c_1^0fg+\mathrm{Ma_2}\partial_x c_2^0\left(g(\mu f+\dfrac{g}{2})\right)\right) \partial_x c_2^0=(\mathrm{P_{b_2}})^{-1}\partial_x\left(g\partial_x c_2^0\right)+Q_2\partial_x c_2^0,
\end{align}
where $Q_2$ is defined in \eqref{eq: 2.23main}.
%\hl{
\subsection{A fourth-order  two-layer thin film model}\label{subsec:fourth}
%}
To summarize the discussion above, we collect all the obtained evolution equations  for $f,g,c^0_1,c^0_2$. We obtain as a first main result of our work  the closed fourth-order nonlinear system
\begin{equation}\label{eq:main}
\begin{array}{rcl}
  \partial_t f
  +\partial_x \left(\dfrac{f^2(\mathrm{Ma_1} \partial_x c_1+\mu \mathrm{Ma_2}\partial_x c_2)}{2}\right)
  &=&\partial_x Q_1,%\label{eq: 2.24main}
  \\[1.9ex]
\partial_t g
  +\partial_x\left(
      fg\,\mathrm{Ma_1} \partial_x c_1
      +\mathrm{Ma_2}\partial_x c_2\left(g\left(\mu f+\dfrac{g}{2}\right)\right)
    \right)
  &=&\partial_x Q_2,%\label{eq: 2.24second}
  \\[1.5ex]
  f\partial_t c_1
  +\left(\dfrac{f^2}{2}
      \left(\mathrm{Ma_1} \partial_x c_1+\mu \mathrm{Ma_2}\partial_x c_2\right)\right)\partial_x c_1 %\nonumber
      
  & =& \, (\mathrm{P_{b_1}})^{-1}\partial_x(f\partial_x c_1)
      +Q_1 \partial_x c_1,%\label{eq: 2.28main}
      \\[1.9ex]
g\partial_t c_2
  +\left(
      \mathrm{Ma_1} \partial_x c_1fg
      +\mathrm{Ma_2}\partial_x c_2\left(g\left(\mu f+\dfrac{g}{2}\right)\right)
    \right)\partial_x c_2%\nonumber
 
  &
   =&\,(\mathrm{P_{b_2}})^{-1}\partial_x\left(g\partial_x c_2\right)
      +Q_2 \partial_x c_2. %\label{eq: 2.34main}
\end{array}
\end{equation}
Note that we have dropped the superscripts in the quantities $c_1^0,c^0_2$, which denote the leading order bulk concentrations.

\begin{remark}
We recover the evolution equations for height and concentration for the case of one-layer thin film flow obtained by Conn et al.~in \cite{conn2017stability} from the evolution equations \eqref{eq:main} by taking $f=0, ~c_1=0$  (see eq. (4.7.1) \&(4.7.3) in \cite{conn2017stability}).
For the sake of simplicity, we derived the evolution equations for two-dimensional sets $\Omega_1,\, \Omega_2$. Using the same computations, one can handle three-dimensional domains, see \cite{barthwal2023construction} for one-layer systems.
\end{remark}

\section{A two-layer thin film model on the large scale}\label{sec: 3}
We identify in this section a first-order sub-system of the evolution equations 
\eqref{eq:main}. It governs the large-scale
dynamics and refers to a specific flow regime,  both of which we explain in detail 
in Section \ref{subsec:reduction}.  We show in Section \ref{subsec: 3.2} that the sub-system is a strictly hyperbolic system of conservation laws\cite{dafermos2005hyperbolic}.

\subsection{The first-order system for the asymptotic dynamics of the thin film model}\label{subsec:reduction}
%%%%%%%%%%
The equations \eqref{eq:main} contain the 
third-order terms $Q_i$ that vanish if we set for the inverse of the capillarity numbers  $ (\mathrm{Ca_i})^{-1}=0$.  Furthermore we neglect diffusion and choose $ (\mathrm{Pb_i})^{-1}=0$. 
The remaining equations are given by 
%\begin{align}
\begin{equation}\label{eq:maind}
\begin{array}{rcl}
  \partial_t f
  +\partial_x \left(\dfrac{f^2(\mathrm{Ma_1} \partial_x c_1+\mu \mathrm{Ma_2}\partial_x c_2)}{2}\right)
  &=&%\mathcolorbox{white}
  {0,}%\label{eq: 2.24maind}
  \\[1.9ex]
  \partial_t g
  +\partial_x\left(
      fg\,\mathrm{Ma_1} \partial_x c_1
      +\mathrm{Ma_2}\partial_x c_2\left(g\left(\mu f+\dfrac{g}{2}\right)\right)
    \right)
  &=&{0,}%\label{eq: 2.24secondd}
  \\[1.9ex]
 f\partial_t c_1
  +\left(\dfrac{f^2}{2}
      \left(\mathrm{Ma_1} \partial_x c_1+\mu \mathrm{Ma_2}\partial_x c_2\right)\right)\partial_x c_1 &=&0,
 %    \label{eq: 2.28maind}
 \\[1.9ex]
  g\partial_t c_2
  +\left(
      \mathrm{Ma_1} \partial_x c_1fg
      +\mathrm{Ma_2}\partial_x c_2\left(g\left(\mu f+\dfrac{g}{2}\right)\right)
    \right)\partial_x c_2
  & =&0.
%\label{eq: 2.34maind}
%\end{align}
\end{array}
\end{equation}
The assumptions about the capillarity 
and  P\'{e}clet numbers select a specific flow regime. But regardless of whether the assumptions are justified, the equations \eqref{eq:maind} describe the 
basic dynamics of our two-layer system. 
A particularly interesting  sub-system is found when we set  $\mu =0$ and  choose the notional thicknesses  $\eta_i^*$ and the concentration reference numbers  $C_i^*$ in such a way that $\mathrm{Ma_1}= \mathrm{Ma_2}=1$
holds.  The reduced system of  evolution equations for the large-scale dynamics is then given by
\begin{equation}
\begin{array}{rcl}
%\begin{align}
\label{eq: 2.9}
%\begin{cases}
\dfrac{\partial f}{\partial t}+\dfrac{1}{2}\dfrac{\partial}{\partial x}\left(f^2\dfrac{\partial c_1}{\partial x}\right)&=&0,\vspace{0.2 cm}\\[0.4em]
\dfrac{\partial c_1}{\partial t}+\dfrac{1}{2}f\left(\dfrac{\partial c_1}{\partial x}\right)^2&=&0,\\[1.9ex]
\dfrac{\partial g}{\partial t}+\dfrac{\partial}{\partial x}\left(\dfrac{1}{2}g^2\dfrac{\partial c_2}{\partial x}+fg\dfrac{\partial c_1}{\partial x}\right)&=&0,\\[0.4em]
\dfrac{\partial c_2}{\partial t}+\dfrac{1}{2}g\left(\dfrac{\partial c_2}{\partial x}\right)^2+f\left(\dfrac{\partial c_1}{\partial x}\right)\left(\dfrac{\partial c_2}{\partial x}\right)&=&0.
 %\end{cases}
%\end{align}
\end{array}
\end{equation}
To uncover the conservation law structure of \eqref{eq: 2.9} we differentiate the second and the fourth equations in $\eqref{eq: 2.9}$ with respect to $x$, respectively, and  obtain
for the film thicknesses $f$, $g$ and  the concentration gradients 
\begin{align*}
b=\dfrac{\partial c_1}{\partial x}, \quad % \text{ and }
q=\dfrac{\partial c_2}{\partial x}
\end{align*}
the following system of  first-order conservation laws: 
\begin{align}\label{eq: Main_system}
%\begin{cases}
    \dfrac{\partial f}{\partial t}+\dfrac{\partial}{\partial x}\left(\dfrac{1}{2}f^2b\right)&=0,\vspace{0.2 cm} \nonumber\\
    \dfrac{\partial b}{\partial t}+\dfrac{\partial}{\partial x}\left(\dfrac{1}{2}fb^2\right)&=0,\vspace{0.2 cm} \\
     \dfrac{\partial g}{\partial t}+\dfrac{\partial}{\partial x}\left(\dfrac{g^2q}{2}+fgb\right)&=0,\vspace{0.2 cm} \nonumber\\
  \dfrac{\partial q}{\partial t}+\dfrac{\partial}{\partial x}\left(\dfrac{gq^2}{2}+fbq\right)&=0\nonumber
 % \end{cases}
\end{align}

The remainder of the paper is devoted to a detailed study of the  system \eqref{eq: Main_system} which lays the foundations to understand \eqref{eq:maind} and even the fourth-order system from Section \ref{subsec:fourth}.

\begin{remark}
\begin{itemize}
\item[(i)] The system \eqref{eq:maind} could have been also directly derived by an appropriate scaling of the  capillarity 
and  P\'{e}clet numbers in terms of the height-to-length ratio $\varepsilon$. But then we would have not found the fourth-order system \eqref{eq:main} which holds for any choice of the numbers. In what follows we provide a rather complete theory for the first-order system \eqref{eq: Main_system}.    
%
%
%Finally, we assume that the capillarity and diffusion effects are negligible, i.e., $(\mathrm{Ca_i})^{-1}=0$ and $(\mathrm{P_{b_i}})^{-1}=0$ hold for $i\in\{1,2\}$. holds. Then we have eliminated all higher-order derivatives ($Q_1$ and $Q_2$) from \eqref{eq: 2.24main}, \eqref{eq: 2.24second} and \eqref{eq: 2.28main}, \eqref{eq: 2.34main}, 
\item[(ii)] The assumption that  the viscosity ratio $\mu$ is very small holds for e.g.\ oil-water type flows. Clearly, this means that the bottom layer should be more viscous than the upper layer. From the mathematical  point of view, we observe that  the choice $\mu =0$ decouples  the first two equations in \eqref{eq: Main_system}. Also, we recover the one-layer system \eqref{eq: temple} by setting $f=b=0$.
%
%
%In view of this assumption, we put $\mu =0 $ in \eqref{eq: 2.24main}, \eqref{eq: 2.24second}, \eqref{eq: 2.28main} and \eqref{eq: 2.34main}.
%
%Finally, we assume that the capillarity and diffusion effects are negligible, i.e., $(\mathrm{Ca_i})^{-1}=0$ and $(\mathrm{P_{b_i}})^{-1}=0$ hold for $i\in\{1,2\}$. We choose the notional thicknesses and concentration scales $C_i^*, ~i\in \{1, 2\}$ in such a way that $\mathrm{Ma_1}= \mathrm{Ma_2}=1$ holds. Then we have eliminated all higher-order derivatives ($Q_1$ and $Q_2$) from \eqref{eq: 2.24main}, \eqref{eq: 2.24second} and \eqref{eq: 2.28main}, \eqref{eq: 2.34main},
%\end{itemize}
%\end{remark}
%\begin{remark}
\item[(iii)]
Note that the equilibrium condition  $c_1=K_{eq}c_2$ at the interface can be recovered from the reduced first-order system. For $K_{eq}>0$, if we have  $K_{eq}f-g=0$, then at the level sets $fb=gq$ we  obtain the interfacial boundary conditions \eqref{eq: interface}  because $f, b, g, q$ satisfy $g/f=b/q=K_{eq}$. In Section \ref{sec: Riemann_invariants} and Section \ref{sec: 5}, we observe that these ratios can be expressed as Riemann invariants of the system \eqref{eq: Main_system}. Note, however, that we 
do not consider the case  $fb=gq$, which may give rise to a possible resonance phenomenon in a purely first-order theory.
\end{itemize}
\end{remark}

The quasilinear system \eqref{eq: Main_system} can be written in the compact conservative form 
\begin{align}
&\mathbf{U}_t+(\mathbf{F}{(\mathbf{U}))}_x=0,\label{conservation-laws}
\end{align}
where $\mathbf{U}=(f,b,g,q)^{\top}\in \mathcal{U}$ denotes the vector of conservative variables and 
\[\mathbf{F}(\mathbf{U})=\left(\frac 12 f^2b,\frac 12fb^2,\frac 12 g^2q+fbg,\frac 12 gq^2+fbq\right)^{\top}\]
is the flux vector. The state space $\mathcal{U}$ is introduced as
\begin{align}\label{statespace}
    \mathcal{U}= \Big\{(f, b, g, q)^\top\in \mathbb{R}^4: f, b, g, q>0, fb<gq \Big\}.
\end{align}
For the well-posedness of the full system \eqref{eq: 2.24main}-\eqref{eq: 2.35main} it is essential that the reduced first-order system \eqref{eq: Main_system} is hyperbolic and equipped with a proper entropy structure
in the state space. We study these issues in the 
remainder of the article.

\begin{remark}
The state space $\mathcal{U}$ is restricted to the case $f>0, b>0, g>0, q>0$ and $fb<gq$ which ensures the strict hyperbolicity of the system \eqref{eq: Main_system} as we will see in Section \ref{subsec: 3.2}, see also Remark 
\ref{Rem_hyp}.
\end{remark}
\subsection{Strict hyperbolicity of the reduced system \eqref{eq: Main_system}}\label{subsec: 3.2}
For smooth solutions, we can convert the system $\eqref{eq: Main_system}$ into its primitive form as
\begin{align*}%\label{eq: 3.6}
&\mathbf{U}_t+\mathbf{DF}(\mathbf{U})  \mathbf{U}_x=0,
\end{align*}
with the flux Jacobian 
\begin{align}
&\mathbf{DF}(\mathbf{U})=\left(
\begin{array}{cccc}
fb & \frac 12 f^2 & 0 & 0\\
\frac 12 b^2 & fb & 0 & 0\\
gb & fg & fb+gq & \frac 12 g^2\\
bq & fq & \frac 12 q^2 & fb+qg \\
\end{array}
\right). \nonumber
\end{align}
Due to its block matrix structure, the eigenvalues $\lambda_k= \lambda_k(\mathbf{U})$ of $\mathbf{DF}(\mathbf{U})$ can be explicitly  computed for 
$k=1,\ldots,4$. They are real and obey  for $\mathbf{U}=(f,b,g,q)^{\top}\in \mathcal{U}$ the ordering
\begin{align}\label{eigenvalues}
\lambda_1(\mathbf{U})=\dfrac{fb}{2}<~\lambda_2(\mathbf{U})=\dfrac{3fb}{2}<~\lambda_3(\mathbf{U})=fb+\dfrac{gq}{2}<~\lambda_4(\mathbf{U})=fb+\dfrac{3gq}{2}.
\end{align}
Moreover, the (right) eigenvectors $\mathbf{r_k}= \mathbf{r_k}(\mathbf{U})$ are
\begin{equation}\label{righteigenvectors}
\begin{array}{rcl}\displaystyle 
\mathbf{r_1}(\mathbf{U})= \left(-\frac{f}{b},1,0,0\right)^{\top}, && \!\!\!\!\!\! \displaystyle \mathbf{r_2}(\mathbf{U})=\left(\frac{fb-3gq}{4qb},\frac{fb-3gq}{4qf},\frac{g}{q},1\right)^{\top},\\[3ex]\displaystyle 
\mathbf{r_3}(\mathbf{U})=\left(0,0,-\frac{g}{q},1\right)^{\top}, & & \!\!\!\!\!\! \displaystyle \mathbf{r_4}(\mathbf{U})=\left(0,0,\frac{g}{q},1\right)^{\top}.
\end{array}
\end{equation}
The eigenvectors are linearly independent in $\mathcal U$.  This implies the strict hyperbolicity of the reduced system \eqref{eq: Main_system} in $\mathcal U$.

Next, we obtain with the eigenvector formulas for the four characteristic fields \begin{equation}\label{GL}\nabla_{\mathbf{U}} \lambda_1\cdot\mathbf{r_1}=\nabla_{\mathbf{U}} \lambda_3\cdot\mathbf{r_3}=0 \text{ and } \nabla_{\mathbf{U}} \lambda_2\cdot  \mathbf{r_2}=3fb \text{ and }\nabla_{\mathbf{U}} \lambda_4\cdot  \mathbf{r_4}=3gq. 
\end{equation}
Thus, the first and the third characteristic fields are linearly degenerate, while the second and the fourth characteristic fields of \eqref{eq: Main_system} are genuinely nonlinear in $\mathcal{U}$ \cite{dafermos2005hyperbolic}. Note that $\nabla_{\mathbf{U}}$ denotes the gradient of any field with respect to the vector $\mathbf{U}=(f, b, g, q)^T$. The properties in \eqref{GL} will be essential for solving the Riemann problem for \eqref{eq: Main_system} in Section 
\ref{sec: 5}.
\begin{remark} (i)\label{Rem_hyp}
We consider here only the case $f,b,g,q >0$  for which strict hyperbolicty can be shown. The same holds for negative concentration gradients and for concentration gradients with  $b<0$  and $q>0$. 
We conjecture that a similar analysis to the following one can be done in these cases. The case $q<0$ is not clear since then a Riemann invariant gets complex, see \eqref{Rinvariants} below. For a more complete study of different choices of state spaces, we refer to \cite{barthwal2025generalized}. 
\\
(ii)
In order to maintain strict hyperbolicity, we assume that $gq>fb$ holds for $\mathbf{U} \in \mathcal U$, see \eqref{statespace}. %\hl{This then ensures the global well-posedness of weak entropy solutions for small BV initial data} \cite{glimm1965solutions}.
For the opposite case $fb>gq$, we have also strict hyperbolicity, but the ordering of eigenvalues changes, and the second and third wave change their role as long as $fb<3gq$. In the critical case $fb=gq$, eigenvalues coincide. The system remains hyperbolic and possesses a full set of eigenvectors. Due to the presence of the full set of eigenvectors, one can still define the rarefaction curves as the integral curves of the second and fourth characteristic fields. However, additional resonant waves may occur due to the recurrence of eigenvalues; see Remark \ref{rem_resonance}. 
\end{remark}
\section{Entropy/entropy-flux pairs and  well-posedness}\label{sec: 4}
Hyperbolicity is a necessary condition to ensure the well-posedness of the initial-value problem for \eqref{eq: Main_system}.  In this section, we show that the reduced system has much more structure.
Precisely, we propose a class of entropy/entropy-flux pairs for \eqref{eq: Main_system} and identify a specific pair with convex entropy. By an entropy/entropy-flux pair, we mean a pair 
$(E, Q): \mathcal{U}\mapsto \mathbb{R}\times \mathbb{R}$ such that $(E, Q)$ satisfies 
\begin{equation}\label{entropy}
\nabla_{\mathbf{U}} E(\mathbf{U})^\top\mathbf{DF(U)}=\nabla_{\mathbf{U}}Q(\mathbf{U})^\top
 \text{ for all $\mathbf{U}\in \mathcal U$}.
\end{equation}
The compatibility condition \eqref{entropy} implies in particular that any smooth solution $\mathbf{U}=\mathbf{U}(x,t)$ of \eqref{eq: Main_system} is entropy conservative, i.e., 
\[{E(\mathbf{U}(x,t))}_t+{Q(\mathbf{U}(x,t))}_x=0 \mbox{ in $\mathbb{R}\times(0,\infty)$}
\] holds. For more implications in case of weak solutions, we refer to \cite{dafermos2005hyperbolic,serre1999systems}.
\subsection{Riemann invariants}\label{sec: Riemann_invariants}
To explore the entropic structure of \eqref{eq: Main_system}, we rely on the finding that the strictly hyperbolic system \eqref{eq: Main_system} is equipped with a full set of  Riemann invariants. Indeed, three $k$-Riemann invariants $\Gamma_1^k,\Gamma_2^k,\Gamma_3^k$ corresponding to the $k\rm{th}$-characteristic field, $k=1,\ldots, 4$, are given by 
\begin{align}
\begin{cases}
\Gamma_1^1=g,~\quad\Gamma_2^1=q,~\quad\Gamma_3^1=fb,\\
    \Gamma_1^2 ={b}/{f},~\Gamma_2^2={q}/{g},~\Gamma_3^2={(fb+gq)}/{(gq)^{1/4}},\\
    \Gamma_1^3=f,~\quad\Gamma_2^3=b,~\quad\Gamma_3^3=gq,\\
   \Gamma_1^4=f,~\quad\Gamma_2^4=b,~\quad\Gamma_3^4={q}/{g}.
\end{cases}   \label{Rinvariants} 
\end{align}
From \eqref{Rinvariants} we select the four Riemann invariants 
\begin{equation}\label{selectedinvariants}
\xi=w_1=b/f, \quad     u= w_2 = fb,  \quad \tau = w_3 = q/g, \quad 
\eta = w_4=(fb+gq)/(gq)^{1/4}.
\end{equation}
These form a coordinate system for \eqref{eq: Main_system}, i.e., the mapping 
\[
\mathbf{U}  = (f,b,g,q)^\top \mapsto \mathbf{W}= (\xi,u,\tau,\eta)^\top
\]
is one-to-one in $\mathcal U$.  This is proven in the following Lemma.
\begin{lemma}\label{lemma}
The functions  $w_1, w_2, w_3, w_4$  from  \eqref{selectedinvariants}  form a coordinate system for system \eqref{eq: Main_system}. Furthermore,  the system \eqref{eq: Main_system} in terms of $\mathbf{W}= (w_1,w_2,w_3,w_4)^\top$ is equivalent to the diagonal system
\begin{align}\label{diagonal_form}w_{k,t}+\lambda_k(\mathbf{U}(\mathbf{W})){ w_{k,x} }=0 \qquad (k=1, \ldots,4).
\end{align}
\end{lemma}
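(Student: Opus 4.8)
The plan is to prove the two assertions in turn: that $\mathbf{W}=(w_1,w_2,w_3,w_4)^\top$ is a smooth change of variables on $\mathcal U$, and that system \eqref{eq: Main_system} becomes diagonal in these variables with coefficients $\lambda_1,\dots,\lambda_4$. The guiding principle for the second part is the elementary observation that, for a strictly hyperbolic system, a scalar function $w$ whose gradient $\nabla_{\mathbf{U}} w$ is a \emph{left} eigenvector of $\mathbf{DF}(\mathbf{U})$ for the eigenvalue $\lambda_k$ automatically satisfies $w_t+\lambda_k w_x=0$ along smooth solutions; indeed, using $\mathbf{U}_t=-\mathbf{DF}(\mathbf{U})\mathbf{U}_x$ gives $w_t+\lambda_k w_x=\nabla_{\mathbf{U}} w^\top(\lambda_k I-\mathbf{DF}(\mathbf{U}))\mathbf{U}_x=0$. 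Since $\lambda_1<\cdots<\lambda_4$ are simple and the $\mathbf{r_j}$ in \eqref{righteigenvectors} form a basis, a vector is a left eigenvector for $\lambda_k$ precisely when it is orthogonal to every $\mathbf{r_j}$ with $j\neq k$. Thus the whole of the diagonalisation reduces to the orthogonality relations $\nabla_{\mathbf{U}} w_k\cdot\mathbf{r_j}=0$ for $j\neq k$.

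For the coordinate-system claim I would invert the map explicitly. From $w_1=b/f$ and $w_2=fb$ one uniquely recovers $f=\sqrt{w_2/w_1}$ and $b=\sqrt{w_1w_2}$ in the positive orthant. Setting $P=gq$ and using $fb=w_2$, the last coordinate reads $w_4=(w_2+P)P^{-1/4}$; since $\partial_P w_4=(3P-w_2)/(4P^{5/4})>0$ on the range $P>w_2$ forced by the constraint $fb<gq$ of $\mathcal U$, the map $P\mapsto w_4$ is strictly increasing and recovers $P$ uniquely. Finally $w_3=q/g$ together with $P=gq$ yields $g=\sqrt{P/w_3}$ and $q=\sqrt{Pw_3}$. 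This gives global injectivity and an explicit, smooth inverse. For completeness one may certify the change of variables through the Jacobian: exploiting that the rows of $\partial\mathbf{W}/\partial\mathbf{U}$ for $w_1,w_2$ are supported on the $(f,b)$-columns, a generalised Laplace expansion factorises the determinant into two $2\times 2$ minors and produces $\det(\partial\mathbf{W}/\partial\mathbf{U})=(3gq-fb)\,bq\big/\big(fg\,(gq)^{5/4}\big)$, which is strictly positive on $\mathcal U$.

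For the diagonalisation I would verify the orthogonality relations field by field using \eqref{righteigenvectors}, each being a one-line computation. The gradient $(-b/f^2,1/f,0,0)$ of $w_1$ annihilates $\mathbf{r_2},\mathbf{r_3},\mathbf{r_4}$; the gradient $(b,f,0,0)$ of $w_2$ annihilates $\mathbf{r_1},\mathbf{r_3},\mathbf{r_4}$; the gradient $(0,0,-q/g^2,1/g)$ of $w_3$ annihilates $\mathbf{r_1},\mathbf{r_2},\mathbf{r_4}$; and the gradient of $w_4$, computed through $P=gq$ as $\big(bP^{-1/4},\,fP^{-1/4},\,(3P-fb)q/(4P^{5/4}),\,(3P-fb)g/(4P^{5/4})\big)$, annihilates $\mathbf{r_1},\mathbf{r_2},\mathbf{r_3}$. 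By the principle recalled above, each $\nabla_{\mathbf{U}} w_k$ is then a left eigenvector for $\lambda_k$, so $\nabla_{\mathbf{U}} w_k^\top\mathbf{DF}(\mathbf{U})=\lambda_k\,\nabla_{\mathbf{U}} w_k^\top$, and the chain-rule computation delivers \eqref{diagonal_form}.

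The real content sits in two places rather than in the bookkeeping. First, the cancellation making $\nabla_{\mathbf{U}} w_4\cdot\mathbf{r_2}=0$ is exactly where the exponent $1/4$ in the definition of $w_4$ is indispensable; a different power would break that pairing, and reverse-engineering this is presumably how the invariant $\eta$ was selected. Second, and the genuine obstacle, is the global invertibility of $\mathbf{W}$: because $w_4$ couples all four unknowns nonlinearly through $(gq)^{1/4}$, injectivity cannot be read off componentwise and rests on the strict monotonicity of $P\mapsto(w_2+P)P^{-1/4}$, which in turn requires $P>w_2$, i.e.\ precisely the state-space restriction $fb<gq$ built into $\mathcal U$. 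Once this structural fact is secured, everything else is forced.
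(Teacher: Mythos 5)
Your proof is correct, and its computational core --- checking that $\nabla_{\mathbf{U}}w_k$ annihilates $\mathbf{r_j}$ for $j\neq k$, i.e.\ the conditions \eqref{eq: Rinvariantsconditions} --- is exactly the paper's mechanism. The difference lies in what is verified versus cited: the paper checks only the hardest case $w_4$, leaves $w_1,w_2,w_3$ to the reader, and appeals twice to \cite{dafermos2005hyperbolic} (Theorem 7.3.1 for the coordinate-system criterion, Chapter 7.3 for the equivalence with the diagonal form), whereas you verify all four fields and replace both citations by self-contained arguments: the left-eigenvector/chain-rule computation that yields \eqref{diagonal_form}, and, more substantially, an explicit global inverse of $\mathbf{U}\mapsto\mathbf{W}$ together with the factorised Jacobian determinant $(3gq-fb)\,bq/\bigl(fg\,(gq)^{5/4}\bigr)>0$. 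This last point is a genuine gain: the eigenvector criterion \eqref{eq: Rinvariantsconditions} by itself only makes $\mathbf{W}$ a local diffeomorphism, while the lemma asserts the map is one-to-one on all of $\mathcal U$; your monotonicity argument for $P\mapsto(w_2+P)P^{-1/4}$ supplies this global injectivity and pinpoints where the restriction $fb<gq$ from \eqref{statespace} enters. (Incidentally, your gradient of $w_4$ is the correct one; the paper's displayed second component contains a typo, $f/(bq)^{1/4}$ in place of $f/(gq)^{1/4}$.) Two small refinements: the derivative $(3P-w_2)/(4P^{5/4})$ is positive already for $3P>w_2$, so $P>w_2$ is sufficient rather than necessary for monotonicity; and since the lemma claims equivalence of the two systems, you should add the one-line remark that invertibility of $\partial\mathbf{W}/\partial\mathbf{U}$ allows the chain-rule computation to be run backwards, so that solutions of \eqref{diagonal_form} also produce solutions of \eqref{eq: Main_system}.
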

\begin{proof}
A necessary and sufficient condition for $w_1, w_2, w_3, w_4$ to form a coordinate system  for \eqref{eq: Main_system} is that the gradients 
$\nabla_{\mathbf{U}}w_k$ 
are left eigenvectors of  the flux Jacobian  $\mathbf{D} \mathbf{F}$ (see Theorem 7.3.1 in \cite{dafermos2005hyperbolic}), i.e.,
\begin{align}\label{eq: Rinvariantsconditions}
\nabla_{\mathbf{U}}w_k(\mathbf{U}).\mathbf{r}_j(\mathbf{U})\begin{cases}
    \neq 0 ~{\rm{if}}~j=k,\\
    = 0 ~{\rm{if}}~j\neq k,
\end{cases}~\forall~k, j =1, \ldots, 4\qquad (\mathbf{U} \in \mathcal U),
\end{align}
with $\mathbf{r}_j$  being  the $j$th eigenvector of the reduced system \eqref{eq: Main_system}.

Here, we only prove the statement  \eqref{eq: Rinvariantsconditions} for the most complicated function  $w_4$ and leave the other similar computations to the reader. Now, we directly compute
\[\nabla_{\mathbf{U}}w_4(\mathbf{U})=\left(\dfrac{b}{(gq)^{1/4}}, \dfrac{f}{(bq)^{1/4}}, \dfrac{-q(fb-3gq)}{4(gq)^{5/4}}, \dfrac{-g(fb-3gq)}{4(gq)^{5/4}}\right)^\top.\]
It is straightforward to compute  % by taking the scalar product of $\nabla_{\mathbf{U}}w_4(\mathbf{U})$ with $\mathbf{r_j}(\mathbf{U})$ that 
$\nabla_{\mathbf{U}}w_4(\mathbf{U})\cdot\mathbf{r_j}(\mathbf{U})=0$ for $j\in \{1, 2, 3\}$ while $\nabla_{\mathbf{U}}w_4(\mathbf{U})\cdot \mathbf{r_4}(\mathbf{U})=\dfrac{-g(fb-3gq)}{2(gq)^{5/4}}\neq 0$ for $\mathbf{U}\in \mathcal{U}$, which implies that $w_4$ satisfies \eqref{eq: Rinvariantsconditions}. 

For the equivalence of \eqref{eq: Main_system} and \eqref{diagonal_form}, we refer again to Chapter 7.3 in \cite{dafermos2005hyperbolic}.
\end{proof}
In terms of the variables $\xi,u,\tau,\eta$, the diagonal system \eqref{diagonal_form}  writes out for $v=gq$ as 
\begin{equation}\label{Rtransformed}
\begin{array}{rclrcl}
   \xi_t+\dfrac{u}{2} \xi_x&=&0,&  u_t+\dfrac{3 u}{2} u_x&=&0, \\[1.5ex]
   \tau_t+\left(u+\dfrac{v}{2}\right)\tau_x&=&0, &\quad 
\eta_t+\left(u+\dfrac{3v}{2}\right)\eta_x&=&0.
\end{array}    
\end{equation}
Before we return to the entropy discussion,  let us mention that $k$-Riemann invariants remain constant across $k$-rarefaction waves, which will be used in Section \ref{sec: 5} \cite{dafermos2005hyperbolic}. 
\begin{remark}
The statements of Lemma \ref{lemma} are an important finding of this paper. It is remarkable that there are explicit Riemann invariants for a system of hyperbolic conservation laws having three or more equations.   
\end{remark}
\subsection{Entropy/entropy-flux pairs for the system \eqref{eq: Main_system}\label{subsec:4.2}}
To prove \eqref{entropy} for some given pair $(E,Q)$, it suffices to verify the corresponding relation for the transformed system \eqref{Rtransformed}.
\begin{theorem}[Entropy/entropy-flux pairs] \label{theo1}Let $\rho, \mu$ and $\nu$ be arbitrary functions in  $C^1(0,\infty)$.
For the system \eqref{eq: Main_system}, there exists a class of entropy/entropy-flux pairs $( E, Q)$ which is 
given by
\begin{equation}\label{eq:entropyclass}
\begin{array}{rcl}
 E[f, b, g, q]&=& \rho(fb)+\sqrt{fb}\mu\left(\dfrac{b}{\sqrt{fb}}\right)+\sqrt{gq}\nu\left(\dfrac{q}{\sqrt{gq}}\right)+\dfrac{1}{fb+gq},\\[3ex]
 Q[f, b, g, q]&=&\psi(fb)+(fb)^{3/2} \mu\left(\dfrac{b}{\sqrt{fb}}\right)+(\sqrt{gq}) \nu\left(\dfrac{q}{\sqrt{gq}}\right)(fb+gq)\\
&&\hspace{3.5 cm}-\ln \left((fb+gq)^{3/2}\right)-\dfrac{fb}{2(fb+gq)}.
\end{array}
\end{equation}
The function $\psi$ is determined from $\psi'(w)=\dfrac{3}{2}  w\rho'(w)$.% with $u=fb$.
\end{theorem}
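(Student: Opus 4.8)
The plan is to carry out the whole verification in the Riemann-invariant coordinates $\mathbf{W}=(w_1,w_2,w_3,w_4)=(\xi,u,\tau,\eta)$ provided by Lemma~\ref{lemma}, in which the system takes the diagonal form \eqref{Rtransformed}. For such a diagonal system the compatibility identity \eqref{entropy} is equivalent to the four scalar relations
\begin{equation*}
\partial_{w_k}Q=\lambda_k\,\partial_{w_k}E\qquad(k=1,\dots,4),
\end{equation*}
with speeds $\lambda_1=\tfrac{u}{2}$, $\lambda_2=\tfrac{3u}{2}$, $\lambda_3=u+\tfrac{v}{2}$, $\lambda_4=u+\tfrac{3v}{2}$ read off from \eqref{Rtransformed}, where $v=gq$. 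Hence the first step is to rewrite the given pair in these coordinates: using $fb=u$, $b/\sqrt{fb}=\sqrt{\xi}$, $gq=v$, $q/\sqrt{gq}=\sqrt{\tau}$ and $fb+gq=u+v$, the entropy reads $E=\rho(u)+\sqrt{u}\,\mu(\sqrt{\xi})+\sqrt{v}\,\nu(\sqrt{\tau})+(u+v)^{-1}$, and $Q$ transforms analogously. The only coupling among the coordinates is carried by $v$, which is linked to $u$ and $\eta$ through the single implicit relation $\eta=(u+v)\,v^{-1/4}$.

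First I would dispatch the linearly degenerate directions $k=1$ and $k=3$. Since $v=v(u,\eta)$ is independent of $\xi$ and $\tau$, the coordinate $\xi$ appears in $E,Q$ only through the $\mu$-block and $\tau$ only through the $\nu$-block. Consequently the $k=1$ and $k=3$ relations collapse to one-line identities in $\mu'$ and $\nu'$, which I would confirm by matching the $\xi$- and $\tau$-prefactors of $Q$ against $\lambda_1$ and $\lambda_3$ times those of $E$. The substantive work sits in the genuinely nonlinear directions $k=2$ and $k=4$, where $\rho/\psi$, the $\mu$/$\nu$ blocks, and the reciprocal and logarithmic terms contribute at once. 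For these I would first record, by differentiating $\eta=(u+v)v^{-1/4}$, the implicit slopes $\partial_u v=-4v/(3v-u)$ and $\partial_\eta v=4v^{5/4}/(3v-u)$; these are well defined on $\mathcal{U}$ because $3v-u=3gq-fb>0$ there, thanks to the constraint $gq>fb$ in \eqref{statespace}.

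For $k=2$ I would differentiate in $u$ with $\xi,\tau,\eta$ held fixed, inserting $\partial_u v$. Isolating, by the arbitrariness of $\rho$, the part of the identity carried by $\rho$ and $\psi$ alone forces $\psi'(u)=\tfrac{3u}{2}\rho'(u)$, which is exactly the stated relation $\psi'(w)=\tfrac32 w\rho'(w)$; this is where that constraint originates. The residual $\mu$-, $\nu$-, reciprocal- and logarithmic contributions then have to cancel among themselves, and I expect this to emerge after substituting $\partial_u v$ and reducing over the common denominator $(u+v)$. The delicate case is the $k=4$ relation: since $\eta$ enters solely through $v$, every $v$-dependent term of $E$ and $Q$ contributes a factor $\partial_\eta v$, and after cancelling this nonzero factor one is left with a single algebraic identity tying the $\nu(\sqrt{\tau})\,v^{-1/2}$ term, the reciprocal term $(u+v)^{-2}$ and the logarithmic term $(u+v)^{-1}$ to the weight $\lambda_4=u+\tfrac{3v}{2}$.

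I expect this $k=4$ identity to be the main obstacle, because it is the only place where the capillary-type corrections $(u+v)^{-1}$ in $E$ and $-\tfrac32\ln(u+v)-\tfrac{u}{2(u+v)}$ in $Q$ interact nontrivially with the implicit slope $\partial_\eta v$ and with the $\nu$-block simultaneously. Concretely, after inserting $\partial_\eta v=4v^{5/4}/(3v-u)$ I would clear the denominator $(u+v)^2(3v-u)$, organize the result by powers of $v$, and check that it reduces to a polynomial identity; the weight $u+\tfrac{3v}{2}$ contributed by $\lambda_4$ must then exactly reproduce the combination coming from $\partial_v$ of the $v$-dependent terms. Once all four scalar relations are verified, the reduction recorded at the outset returns \eqref{entropy} in the original variables $(f,b,g,q)$, which is the claim.
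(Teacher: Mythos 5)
Your proposal follows essentially the same route as the paper's own proof: pass to the Riemann-invariant coordinates of Lemma~\ref{lemma}, observe that for the diagonal form \eqref{Rtransformed} the compatibility condition \eqref{entropy} reduces to the four scalar relations $Q_{w_k}=\lambda_k E_{w_k}$ (the paper's \eqref{eq: 4.6Relations}), and treat the coupling through the implicit relation $u+v=\eta v^{1/4}$ with $v=gq$. Your technical ingredients are correct: the implicit slopes $\partial_u v=-4v/(3v-u)$ and $\partial_\eta v=4v^{5/4}/(3v-u)$ are exactly right, they are well defined on $\mathcal U$ precisely because $3gq-fb>0$ there, and the constraint $\psi'(w)=\tfrac32 w\rho'(w)$ does indeed originate from balancing the $\rho$/$\psi$ block of the $k=2$ relation, as you say.

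One concrete obstacle you will hit when you actually perform the prefactor matching you describe for $k=1,3$: the flux $Q$ as printed in \eqref{eq:entropyclass} does \emph{not} satisfy those relations. Its $\mu$-term yields $Q_\xi=u\,E_\xi$ instead of $\tfrac u2 E_\xi$, and its $\nu$-term yields $Q_\tau=(u+v)E_\tau$ instead of $\left(u+\tfrac v2\right)E_\tau$. The compatible flux --- the one the paper's proof actually verifies after transformation --- carries the coefficients $\tfrac{(fb)^{3/2}}{2}\,\mu\!\left(\tfrac{b}{\sqrt{fb}}\right)$ and $\sqrt{gq}\,\nu\!\left(\tfrac{q}{\sqrt{gq}}\right)\left(fb+\tfrac{gq}{2}\right)$; i.e.\ the printed statement contains two coefficient slips, and your verification only closes after correcting them. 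This does not affect your method: with the corrected coefficients, your $k=2$ computation goes through exactly as outlined (the $\nu$-block cancels upon inserting $\partial_u v$), and the $k=4$ relation, which you flag as the main difficulty, is in fact the easiest step --- after cancelling the common factor $\partial_\eta v$, the $\nu$-terms match identically and the reciprocal/logarithmic terms reduce to the trivial identity $\tfrac{2u+3v}{2}=u+\tfrac{3v}{2}$.
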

\begin{proof} We have to prove the compatibility relation \eqref{entropy} for $(E,Q)$ from \eqref{eq:entropyclass}.  In view of  the bijectivity of the map $(\xi, u, \tau, \eta)\mapsto (f, b, g, q)$, it suffices to  show that 
the transformed compatibility relation holds for the functions $E[\xi, u, \tau, \eta] := E[f, b, g, q]$ and
$Q[\xi, u, \tau, \eta] := Q[f, b, g, q]$.  From \eqref{Rtransformed} we observe that the componentwise 
relations are given by 
\begin{equation}\label{eq: 4.6Relations}
%\begin{cases}
Q_{\xi}=\dfrac{u}{2} E_{\xi},  \,  \,% \vspace{0.2 cm}\\
    Q_u=\dfrac{3u}{2}E_u,   \,\, %\vspace{0.2 cm}\\
    Q_{\tau}=\left(u+\dfrac{v}{2}\right) E_{\tau},    \, \, % \vspace{0.2 cm}\\\
    Q_{\eta}= \left(u+\dfrac{3v}{2}\right) E_{\eta}.
%\end{cases}    
\end{equation} 
The quantity $v= gq$ is related to the functions $u$ and $\eta$ by the nonlinear relation $u+v=\eta v^{1/4}$.

The functions $E$ and $Q$  from \eqref{eq:entropyclass}  in terms of the Riemann invariants $\xi, u, \tau, \eta$   are given by 
\[\begin{array}{rcl}
E[\xi, u, \tau, \eta]&=& \rho(u)+\sqrt{u}\mu(\xi)+\sqrt{v}\nu(\tau)+\dfrac{1}{\eta v^{1/4}},\\[2ex]
Q[\xi, u, \tau, \eta]&= &\psi(u)+\dfrac{u^{3/2}}{2}\mu(\xi)+\dfrac{\sqrt{v}\nu(\tau)}{2}\left( \eta v^{1/4}+u\right)-\dfrac{3}{2}\ln\left(\eta v^{1/4}\right)-\dfrac{u}{2\eta v^{1/4}}.
\end{array}
\]
We readily  check that the pair $\left(E[\xi, u, \tau, \eta], Q[\xi, u, \tau, \eta]\right)$ satisfies \eqref{eq: 4.6Relations}.
 \end{proof}
We have provided a class of entropy/entropy flux pairs for system \eqref{eq: Main_system}. We can actually choose a generic strictly convex entropy within this class. For systems equipped with a set of Riemann invariants as a coordinate system, the convexity condition of an entropy $E$ gets simpler:  it is sufficient to check that the Hessian  $H_{ E}$ of $E$ satisfies the conditions (see Chapter 7.4 of \cite{dafermos2005hyperbolic})
\begin{equation} \label{reducedcriterion}\mathbf{r}_k(\mathbf{U})^\top H_{ E}(\mathbf{U})\, \mathbf{r_k(\mathbf{U})}>0 \qquad \text{for all }~k\in \{1, \ldots, 4\} \text{ and all $\mathbf{U}\in \mathcal U$.}
\end{equation}
\begin{theorem}[Convex entropy] \label{Entropytheorem}
The system \eqref{eq: Main_system} is endowed with at least one entropy/entropy-flux pair $(\bar E, \bar Q)$ with strictly convex entropy $\bar E$ in $\mathcal U$. It is given by
\begin{align}\label{convex entropy}
     \bar E[f, b, g, q]= \dfrac{1}{fb}+\dfrac{f^{3/2}}{\sqrt{b}}+\dfrac{1}{fb+gq}+\dfrac{g^{3/2}}{\sqrt{q}}.
\end{align}
The associated entropy flux computes as 
\begin{align*}
    \bar Q[f, b, g, q]= -\dfrac{3}{2}\ln\left((fb(fb+gq))\right)+ \dfrac{f^{5/2}\sqrt{b}}{2}-\dfrac{fb}{2(fb+gq)}+g^{5/2}\sqrt{q}\left(fb+\dfrac{gq}{2}\right).
\end{align*}    
\end{theorem}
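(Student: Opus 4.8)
The plan is to prove the two assertions — that $(\bar E,\bar Q)$ satisfies the compatibility relation \eqref{entropy}, and that $\bar E$ is strictly convex on $\mathcal U$ — by two independent routes. For the first I would not verify \eqref{entropy} from scratch, but rather exhibit $\bar E$ as a distinguished member of the family already constructed in Theorem \ref{theo1}. Matching \eqref{convex entropy} against \eqref{eq:entropyclass} term by term, one reads off the choices $\rho(w)=1/w$, $\mu(s)=s^{-2}$ and $\nu(s)=s^{-2}$: indeed $\rho(fb)=1/(fb)$, while $\sqrt{fb}\,\mu(b/\sqrt{fb})=f^{3/2}/\sqrt{b}$ and $\sqrt{gq}\,\nu(q/\sqrt{gq})=g^{3/2}/\sqrt{q}$ follow from $b/\sqrt{fb}=\sqrt{b/f}$ and its $(g,q)$-analogue, and the term $1/(fb+gq)$ is common to every member of the class. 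Since these $\rho,\mu,\nu$ lie in $C^1(0,\infty)$, Theorem \ref{theo1} immediately delivers compatibility together with the associated flux, in which $\psi$ solves $\psi'(w)=\tfrac32 w\rho'(w)=-\tfrac3{2w}$, i.e. $\psi(w)=-\tfrac32\ln w$. Substituting back and simplifying then produces the stated $\bar Q$; this step is routine but must be carried out carefully to fix all constants.

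For strict convexity I would invoke the reduced criterion \eqref{reducedcriterion}, which applies here because the Riemann invariants \eqref{selectedinvariants} form a coordinate system by Lemma \ref{lemma}; thus it suffices to show $\mathbf{r}_k(\mathbf{U})^\top H_{\bar E}(\mathbf{U})\,\mathbf{r}_k(\mathbf{U})>0$ for $k=1,\dots,4$ and all $\mathbf{U}\in\mathcal U$, with the $\mathbf{r}_k$ from \eqref{righteigenvectors}. I would organise the computation around the additive splitting $\bar E=A(f,b)+C+B(g,q)$ with $A=1/(fb)+f^{3/2}/\sqrt{b}$, $B=g^{3/2}/\sqrt{q}$, and $C=1/m$ depending only on $m=fb+gq$. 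Since $C$ is a function of $m$ alone, its Hessian is $H_C=\tfrac{2}{m^3}(\nabla m)(\nabla m)^\top-\tfrac1{m^2}H_m$ with $\nabla m=(b,f,q,g)^\top$ and $H_m$ the constant matrix pairing the $(f,b)$- and $(g,q)$-components, so that $\mathbf{r}_k^\top H_C\mathbf{r}_k=\tfrac{2}{m^3}(\nabla m\cdot\mathbf{r}_k)^2-\tfrac1{m^2}\mathbf{r}_k^\top H_m\mathbf{r}_k$ reduces to a short scalar computation for each $k$. The contributions of $H_A$ and $H_B$ are confined to the respective two-variable blocks, and for several eigenvectors the pure $f^{3/2}/\sqrt b$ and $g^{3/2}/\sqrt q$ pieces contribute exactly zero.

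The linearly degenerate fields $k=1,3$ and the genuinely nonlinear field $k=4$ are the easy cases: each quadratic form collapses to manifestly positive expressions, the only subtlety being that positivity for $k=4$ rests on $3gq-fb>0$, which holds throughout $\mathcal U$ because $fb<gq$ there by \eqref{statespace}. The main obstacle is the remaining genuinely nonlinear field $k=2$, whose eigenvector is the only one with all four components nonzero, so that the cross-block structure of $H_C$ genuinely enters. Here one finds $\mathbf{r}_2^\top H_B\mathbf{r}_2=0$ and, after combining $\mathbf{r}_2^\top H_A\mathbf{r}_2$ with $\mathbf{r}_2^\top H_C\mathbf{r}_2$, the form reduces to a positive multiple of $3D\,(Dm+P^2)$ with $D=fb-3gq$ and $P=fb$. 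Positivity then follows from two sign facts that both rely on the state-space restriction: $D<0$ (since $fb<gq$), and $Dm+P^2=2(fb)^2-2fb\,gq-3(gq)^2<0$, which holds precisely because $fb<gq$ keeps $P$ below the positive root of $2P^2-2PS-3S^2$ with $S=gq$. In this way the constraint $fb<gq$ defining $\mathcal U$ is not a convenience but is exactly what closes the convexity estimate in the second characteristic field, and I expect this $k=2$ computation to be the crux of the proof.
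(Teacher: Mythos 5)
Your proposal is correct and follows essentially the same route as the paper: you identify $\bar E$ as a member of the family from Theorem \ref{theo1} (your $\rho(w)=w^{-1}$, $\mu(s)=s^{-2}$, $\nu(s)=s^{-2}$ is the same choice the paper makes, written in the argument convention of \eqref{eq:entropyclass} rather than in the Riemann-invariant variables used in the paper's proof), and you then verify strict convexity through the reduced criterion \eqref{reducedcriterion} along the eigenvectors \eqref{righteigenvectors}. Your block-splitting of the Hessian and the factorization of the second field's quadratic form as a positive multiple of $3D(Dm+P^2)$ with $D=fb-3gq$, $P=fb$, $m=fb+gq$ agree (up to eigenvector normalization) with the paper's explicit expressions, and both arguments close the crucial cases $k=2$ and $k=4$ using exactly the state-space restriction $fb<gq$, hence $3gq-fb>0$.
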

\begin{proof}
With the choice $\rho(u) = u^{-1}, \, \mu(\xi) = \xi^{-1}$ and $ \nu(\tau) = \tau^{-1}$, the function $\bar{E}$ defined in \eqref{convex entropy}  belongs to the class of entropies for system \eqref{eq: Main_system} found in Theorem
\ref{theo1}. Moreover, by using the relation $\psi'(u)=\dfrac{3u\rho'(u)}{2}$, and using the values of $\mu$ and $\nu$ in the expression of $Q$ in Theorem \ref{theo1}, it is straightforward to obtain the expression for the entropy flux $\bar{Q}$. Thus, the pair $(\bar{E}, \bar{Q})$ forms an entropy/entropy-flux pair for the system \eqref{eq: Main_system}. Therefore, we just need to check the strict convexity of $\bar E$ via \eqref{reducedcriterion}. 
We directly compute the Hessian  $H_{\bar E}(\mathbf{U})$ of $\bar E(\mathbf{U})$, which reads as
\begin{align*}
    \left(\begin{array}{cccc}\frac{2b^2}{(bf+gq)^3}+\frac{8+3f^{2}\sqrt{fb}}{4bf^3}&\frac{4-3\sqrt{fb}}{4b^2f^2}+\frac{bf-gq}{(bf+gq)^3}&\frac{2bq}{(bf+gq)^3}&\frac{2bg}{(bf+gq)^3}\\
    \frac{4-3\sqrt{fb}}{4b^2f^2}+\frac{bf-gq}{(bf+gq)^3}&\frac{2f^2}{(bf+gq)^3}+\frac{8+3f^{2}\sqrt{fb}}{4b^3f}&\frac{2fq}{(bf+gq)^3}&\frac{2fg}{(bf+gq)^3}\\\frac{2bq}{(bf+gq)^3}&\frac{2fq}{(bf+gq)^3}&\frac{2q^2}{(bf+gq)^3}+\frac{3}{4\sqrt{gq}}&\frac{gq-bf}{(bf+gq)^3}-\frac{3\sqrt{g}}{4q^{3/2}}\\\frac{2bg}{(bf+gq)^3}&\frac{2fg}{(bf+gq)^3}&\frac{gq-bf}{(bf+gq)^3}-\frac{3\sqrt{g}}{4q^{3/2}}&\frac{2g^2}{(bf+gq)^3}+\frac{3g^{3/2}}{4q^{5/2}}\\\end{array}\right).
\end{align*}
A tedious but straightforward calculation leads to (skipping the $\mathbf{U}$-argument) the following positive numbers 
in $\mathcal U$:
\begin{align*}
\mathbf{r_1^\top } H_{\bar E} \mathbf{r_1}&=\frac{3f^{3/2}}{\sqrt{b}}+\frac{2bf}{(bf+gq)^2}+\frac{2}{bf},\quad
\mathbf{r_2^\top }H_{\bar E} \mathbf{r_2}=
\frac{6\left(3g^2q^2+2bf(gq-fb)\right)}{b^3f^3(3gq-fb)(bf+gq)},\\
\mathbf{r_3^\top } H_{\bar E} \mathbf{r_3}&=\frac{2gq}{(bf+gq)^2}+\frac{3g^{3/2}}{\sqrt{q}},~~\quad \qquad\mathbf{r_4^\top } H_{\bar E} \mathbf{r_4}=\frac{2gq(3gq-bf)}{(bf+gq)^3}.
\end{align*}
Thus, the Hessian of $\bar E$ satisfies \eqref{reducedcriterion} and the chosen entropy $\bar E$ is strictly convex.
\end{proof}
%%%%%%%%%%%%%%%%%%%%%%%%%%%%%%%%%%%%%
\begin{remark}\label{rem42}
The choice of the strictly convex entropy $\bar E$  is not unique, and other strictly convex entropies can be constructed based on the diagonal structure of the system \eqref{diagonal_form} and a proof similar to that of Theorem \ref{Entropytheorem}. 
In fluid mechanics, the entropy typically has a thermodynamical interpretation. We are not aware whether this is also true for $\bar E$ or other possible convex entropies. However, the expression of $\bar E$ indicates that for very small thin films, $\bar E$ increases rapidly, so to keep the system stable, film thickness should not be too low or the concentration gradient should be of a larger scale. For other possible state spaces, the existence of convex entropy can still be ensured due to the work of Conlon and Liu \cite{conlon1981admissibility}. Moreover, the entropy/entropy-flux pairs for the system \eqref{eq: Main_system} become the entropy/entropy flux pairs of the one-layer system \eqref{eq: temple} for $f=0$ and all values of $b$.
\end{remark}
\subsection{Local well-posedness of the Cauchy problem for the system \eqref{eq: Main_system}}
For general quasilinear systems of conservation laws, the Cauchy problem may not be well-posed. However, for Friedrichs-symmetrizable systems, the (local)  well-posedness of classical solutions is guaranteed (see \cite{dafermos2005hyperbolic, MR390516, majda2012compressible, serre1999systems} and references cited therein). If a quasilinear system of conservation laws is equipped with an entropy/entropy-flux pair with strictly convex entropy, it is a Friedrichs-symmetrizable system. In Section \ref{subsec:4.2} we found such a pair for the hyperbolic system \eqref{eq: Main_system}. Therefore, we obtain immediately the local well-posedness of the Cauchy problem for system \eqref{eq: Main_system} with initial data in the Sobolev space $(H^m(\mathbb{R}))^4$ for $m>3/2$ such that for a finite time $T<\infty$, there exists a unique solution $\mathbf{U}\in C([0, T]; (H^m(\mathbb{R}))^4)\cap C^1([0, T]; (H^{m-1}(\mathbb{R}))^4)$. 
%%%%%%%%%%%%%%%%%%%%%%%%%%%%%%%%%%%%%%%%%%%%%%%%%%%%%%%%%%%%%%%%%%%%%%%%%%%%%%%%%%
\section{The Riemann problem}\label{sec: 5}
In this section, we first compute the elementary wave curves for the system \eqref{eq: Main_system}. Based on the wave curves and the entropy/entropy-flux pairs from Section  \ref{sec: 4}, we can then provide a complete entropy solution for the Riemann problem. This Riemann solver allows to construct a Godunov-type finite volume scheme for the system \eqref{eq: Main_system}. To solve the Riemann problem of the ($4\times4$)-system  \eqref{eq: Main_system}, it is crucial that the $4$th characteristic field
turns out to be a Temple field, i.e., the $4$th  shock-wave curve coincides with the integral curve that defines the rarefaction waves of the field \cite{temple1983systems}. 
\subsection{Elementary waves\label{subsec: 4.1}}
We determine the relations that are valid across the different types of admissible elementary waves (corresponding to $k$-shock waves, $k$-contact discontinuities and $k$-rarefaction waves) as algebraic equations. Admissibility refers to the property of an elementary wave to be an entropy solution of \eqref{eq: Main_system}. In particular, we find the Riemann invariants, which are constant across rarefaction waves, and we analyze the Rankine-Hugoniot conditions, which must hold across a discontinuous wave to render it a weak solution of \eqref{eq: Main_system}.
\subsubsection{Rarefaction waves}\label{subsubsec: rarefaction}
%%%%%%%%%%%%%%%%%%%%%%%%%%%%%%%%%%%%%%%%%%%%%%
Before we construct rarefaction wave curves, we
recall that the Riemann invariants  $\Gamma^k_1,\Gamma^k_2,\Gamma^k_3$ from \eqref{Rinvariants} remain constant across a corresponding $k$-rarefaction wave \cite{dafermos2005hyperbolic}.
A $k$-rarefaction wave can only exist for a genuinely nonlinear field. Therefore we consider the cases $k\in \{2,4\}$, see \eqref{GL}.

We start with $k=2$. The slope inside a $2$-rarefaction  wave is given by
\begin{align*}
    \dfrac{dx}{dt}=\dfrac{x}{t}=\lambda_2=\dfrac{3fb}{2}.
\end{align*}
The characteristic speed increases across the rarefaction wave, which means that $fb\geq f_lb_l$ must hold for any left state $\mathbf{U_l}=(f_l, b_l, g_l, q_l)^{\top}\in \mathcal{U}$ across the $2$-rarefaction wave. 
Thus, using the Riemann invariants $\Gamma^2_1,\Gamma^2_2,\Gamma^2_3$ from \eqref{Rinvariants}, the solution inside the $2$-rarefaction wave is given for the left state $\mathbf{U}_l$ by 
\begin{align}\label{eq: 4.2R} R_2:=
    \begin{cases}
        \dfrac{dx}{dt}=\dfrac{x}{t}=\lambda_2=\dfrac{3fb}{2},\vspace{0.2 cm}\\
        \dfrac{f}{b}=\dfrac{f_l}{b_l},~\dfrac{fb+gq}{(gq)^{1/4}}=\dfrac{f_lb_l+g_lq_l}{(g_lq_l)^{1/4}}, ~\dfrac{g}{q}=\dfrac{g_l}{q_l},\vspace{0.2 cm}\\
        fb\geq f_lb_l.
    \end{cases}
\end{align}
$R_2$ is a curve in  the $(f, b, g, q)$-space emanating from $\mathbf{U}_l$.

Similarly, the $4$-rarefaction wave curve  from a left state $\mathbf{U}_l$ is given  by
\begin{align} \label{R4} R_4:=
    \begin{cases}
        \dfrac{dx}{dt}=\dfrac{x}{t}=\lambda_4=fb+\dfrac{3gq}{2},\vspace{0.2 cm}\\
        \dfrac{g}{q}=\dfrac{g_l}{q_l},~f=f_l, b=b_l,\vspace{0.2 cm}\\
        g_lq_l\leq gq.
    \end{cases}
\end{align}
\begin{remark}\label{rem_resonance}
Note that the existence of rarefaction waves can be ensured even for the possible resonant case $fb=gq$, which is excluded for states in  $\mathcal U$. The system \eqref{eq: Main_system} possesses a full set of eigenvectors even for $fb=gq$; see \eqref{righteigenvectors}. For the second characteristic field with a given left state $\mathbf{U}_l$ and $fb=gq$, the 2-rarefaction curve is obtained by solving the IVP
\[
\dfrac{d\mathbf{U}}{d \nu}=\mathbf{r}_2(\mathbf{U}), \quad \mathbf{U}(\nu_l)=\mathbf{U}_l, 
\]
where \[
\mathbf{r}_2(\mathbf{U})=\left(\dfrac{-g}{2b}, \dfrac{-g}{2f}, \dfrac{g}{q}, 1\right)^\top.
\]
In view of the smoothness of $\mathbf{r_2}(\mathbf{U})$, the existence of a unique integral curve (2-rarefaction) curve is an immediate consequence of Picard's existence theorem.
\begin{comment}
In particular, one can explicitly find these rarefaction solutions for $fb=gq$. The Riemann invariants associated with the second characteristic fields become simple in this case. Precisely, the Riemann invariants reduce to
\[
\dfrac{f}{b}=\dfrac{f_l}{b_l},~fb=f_lb_l=gq=g_lq_l, ~\dfrac{g}{q}=\dfrac{g_l}{q_l}.
\]
\hl{This allows us to find an explicit state inside 2-rarefaction for a given left state $(f_l, b_l, g_l, q_l)$. In particular, the state $(f, b, g, q)$ inside 2-rarefaction is then given by}
\[
(f, b, g, q)=(f_l, b_l, g_l, q_l).
\]
\end{comment}
Note that this curve is different from the 3-contact discontinuity curve; see \eqref{eq: 23b} below. However, due to $fb=gq$, a resonance phenomenon is possible, which can cause composite wave structures due to the wave interaction of $2$-rarefaction and $3$-contact discontinuity waves. In particular, the unique solvability of corresponding Riemann problems may not be guaranteed anymore by the Lax entropy condition \cite{hu1998riemann, goatin_resonance}. An additional physically-based selection criterium  is needed, see also Remark \ref{Rem_hyp}(ii). 
%\hl{Therefore, further investigation for this case is necessary but can not be dealt within the classical framework}. 
\end{remark}
%%%%%%%%%%%%%%%%%%%%%%%%%%%%%%%%%%%%%%%%%%%%%%%%%%%%%%%%%%%%%%%
\subsubsection{Discontinuous waves: shock and contact waves}
For speed $\sigma \in \mathbb{R}$,  a discontinuous wave 
\begin{align} \label{wave}
    \mathbf{U} (x, t)=\begin{cases}
        \mathbf{U}_l=(f_l, b_l, g_l, q_l )^\top\in {\mathcal U}&:~x-\sigma t<0,\\
        \mathbf{U}_r=(f_r, b_r, g_r, q_r)^\top \in {\mathcal U}&:~x-\sigma t> 0,
    \end{cases}
\end{align}
is a distributional solution of \eqref{eq: Main_system} if the  Rankine-Hugoniot conditions 
\begin{align}\label{RH}
\sigma[\![\mathbf{U}]\!]  =[\![\mathbf{F(U)}]\!]
\end{align}
hold. Here,  $[\![\mathbf{W}]\!]=\mathbf{W}_r-\mathbf{W}_l$ denotes the jump in $\mathbf{W}$ for $\mathbf{W}_l, \mathbf{W}_r\in \mathbb{R}^4$.

For the system \eqref{eq: Main_system},  the componentwise Rankine-Hugoniot relations  are
\begin{equation}\label{eq: 4.8}
    \begin{array}{rclrcl}
    \sigma[\![f]\!]\!\!\!\!&=&\!\!\!\!\dfrac{1}{2}[\![f^2b]\!],&
    \sigma[\![b]\!]\!\!\!\!&=&\!\!\!\!\dfrac{1}{2}[\![fb^2]\!],\vspace{0.2 cm}\\
    \sigma[\![g]\!]\!\!\!\!&=&\!\!\!\!\Big[\!\!\Big[\dfrac{g^2q}{2}+fgb\Big]\!\!\Big],&
    \sigma[\![q]\!]\!\!\!\!&=& \!\!\!\!\Big[\!\!\Big[\dfrac{gq^2}{2}+fbq\Big]\!\!\Big].
    \end{array}
\end{equation}
Along the first and third linearly degenerate characteristic fields, for a given left state $\mathbf{U_l}=(f_l, b_l, g_l, q_l)^{\top}\in \mathcal{U}$, the discontinuity curves are  curves through $\mathbf{U_l}$ in the $(f, b, g, q)$-space, which are given by
\begin{align}\label{eq: 23a}
    J_1:=\begin{cases}
    \sigma_1=\dfrac{fb}{2}=\dfrac{f_lb_l}{2},\\ 
    fb=f_lb_l,~g=g_l, ~q=q_l
    \end{cases}
\end{align}
and 
\begin{align}\label{eq: 23b}
    J_3:=\begin{cases}
    \sigma_3=fb+\dfrac{gq}{2}=f_lb_l+\dfrac{g_lq_l}{2},\\ 
    f=f_l, ~b=b_l,  ~gq=g_lq_l.
    \end{cases}
\end{align}
The corresponding $1$- and $3$-contact waves \eqref{wave} are entropy solutions of \eqref{eq: Main_system} by definition.

For $k\in \{2,4\}$, the $k$th characteristic field is genuinely nonlinear. We are interested in left/right states that satisfy the  Rankine-Hugoniot condition \eqref{RH} and are entropy admissible. The function \eqref{wave} is then called $k$-shock wave.

For $k=2$ we get a three-parameter family of curves starting from the left state $\mathbf{U_l}$ by eliminating $\sigma_2$ from \eqref{eq: 4.8}. Given the strict hyperbolicity of the system \eqref{eq: Main_system}, the entropy admissibility of elements of $2-$shock wave ($S_2$) with respect to the entropy/entropy-flux pair \eqref{convex entropy} can be checked by the validity of the Lax entropy conditions. For $S_2$ with right state $\mathbf{U}$, these are
\begin{align}\label{eq: 25}
    \lambda_2(\mathbf{U})<\sigma_2<\lambda_2(\mathbf{U_l}),\,
 \lambda_1(\mathbf{U_l})  < \sigma_2.
\end{align}
From the first two equations of \eqref{eq: 4.8}, we first obtain the shock speed as 
\[\sigma_2=\dfrac{b_l(f_l^2+f_lf+f^2)}{2f_l}.\]
Therefore, by eliminating $\sigma_2$ from  \eqref{eq: 4.8} and using the entropy conditions \eqref{eq: 25}, we obtain the following nonlinear relations across $S_2$
\[\dfrac{f}{b}=\dfrac{f_l}{b_l}, \quad \dfrac{g}{q}=\dfrac{g_l}{q_l}, ~g=g_l+\Phi(f_l, b_l, g_l, q_l, f, b, g, q),\]
with $\Phi$ being the  nonlinear function
\begin{align*}
   \Phi(f_l, b_l, g_l, q_l, f, b, g, q):= \dfrac{2f_l}{b_l(f_l^2+f_lf+f^2)}\bigg(\dfrac{g^2 q}{2}-\dfrac{g_l^2q_l}{2}+fb g-f_lb_lg_l\bigg).
\end{align*}
In the one-layer system \eqref{eq: temple}, the shock curve becomes a simple curve in the solution space. However, for system \eqref{eq: Main_system}, this is not the case. To summarize the discussion above, the 2-shock curve is given by 
\begin{align}\label{S_2def}
   S_2:=\begin{cases} \sigma_2= \dfrac{b_l(f_l^2+f_lf+f^2)}{2f_l},\\ 
   \dfrac{f}{b}=\dfrac{f_l}{b_l}, ~\dfrac{g}{q}=\dfrac{g_l}{q_l},~g=g_l+\Phi(f_l, b_l, g_l, q_l, f, b, g, q),
   \\
   fb<f_lb_l,\end{cases}
\end{align}
With the same arguments, we identify $4$-shock waves which satisfy
\begin{align}\label{eq: 24b}
   S_4:=\begin{cases} \sigma_4= fb+\dfrac{q_l(g_l^2+g_lg+g^2)}{2g_l},\\ 
   \dfrac{q}{g}=\dfrac{q_l}{g_l},~f=f_l,~b=b_l,\\
   gq<g_lq_l.
   \end{cases}
\end{align}
We observe that the $4$-shock wave relations \eqref{eq: 24b} coincide with the  $4$-rarefaction relations in \eqref{R4} and form a straight line in the $(f, b, g, q)$-space. This implies that the $4$th characteristic field is a Temple field. However, note that the complete system does not belong to the  Temple class.
%%%%%%%%%%%%%%%%%%%%%%%%%%%%%%%%%%%%%%%%%%%%%%%%%%%%%%%%%%%%%%%%%%%%%%%%%%%%%%%%%%
\subsection{Solution of the Riemann problem}\label{sec: Riemann}
Consider the  Riemann problem for \eqref{eq: Main_system}, that is, the Cauchy problem  with initial data of the form 
\begin{align}
    \mathbf{U} (x, 0)=\begin{cases}
        \mathbf{U}_L=(f_L, b_L, g_L, q_L )^\top\in {\mathcal U}&:~x<0,\\
        \mathbf{U}_R=(f_R, b_R, g_R, q_R)^\top \in {\mathcal U}&:~x>0.
    \end{cases}
\end{align}
Note that Lax curves overlap with each other in the one-layer thin film hyperbolic model, which simplifies the construction of the Riemann solver, which is not the case here. The relations across $R_2$ and $S_2$ render the Lax curves to be nonlinear and different from each other. Thus, it is not easy to connect the Lax curves of two different families for solving the Riemann problem. We provide a novel construction of the Riemann solver by proving that these nonlinearities can be converted into a set of nonlinear algebraic equations, for which we can find a solution.   

Due to the fact that the first and the third characteristic fields allow for contact discontinuities only, there are four wave configurations possible for the solution to the Riemann problem based on different choices of initial data.  In (\ref{RP-pic}-\ref{fig: case1(b)}), we display typical wave configurations with three intermediate states $\mathbf{U_L^*},\mathbf{U_M^*}, \mathbf{U_R^*} \in \mathcal U$. We list all four cases  as follows:
\begin{enumerate}
\item {\textbf{Case 1.}} If the initial data satisfies $f_Rb_R\geq f_Lb_L$ then the second wave is a $2$-rarefaction wave and there are only two possible solution structures:
\begin{itemize} 
    \item {\textbf{Case 1(a).}}  $J_1+R_2+J_3+S_4$.
    \item {\textbf{Case 1(b).}}  $J_1+R_2+J_3+R_4$.
\end{itemize}
\item {\textbf{Case 2.}} If the initial data satisfies $f_Rb_R< f_Lb_L$, the second wave will be a $2$-shock wave. We list the possible solution structure for this case as follows:
\begin{itemize}
    \item {\textbf{Case 2(a).}} $J_1+S_2+J_3+S_4$.
    \item {\textbf{Case 2(b).}} $J_1+S_2+J_3+R_4$.
\end{itemize}
\end{enumerate}
\subsubsection{Case 1}\label{subsubsec: 5.1.1}
Let us consider the elementary wave curves from Section \ref{subsec: 4.1} for the proposed configuration $J_1+R_2+J_3+S_4$ (see \ref{RP-pic}) or $J_1+R_2+J_3+R_4$ (see \ref{Case 1(b)}).

For the first and third characteristic fields, we have contact waves, and therefore we deduce from \eqref{eq: 23a}, \eqref{eq: 23b} the relations 
\begin{equation}\label{eq: 5.2J}
 g^*_L=g_L,\ q^*_L=q_L,\ f_Lb_L=f^*_Lb^*_L,\
 f^*_R=f_M^*,\ b^*_R=b_M^*,\  g_M^*q_M^*=g^*_Rq^*_R.
\end{equation}
The Riemann invariants for a $2$-rarefaction wave $R_2$ provide
\begin{align}\label{eq: 5.2R}
& \ \dfrac{f_L^*}{b_L^*}=\dfrac{f_M^*}{b_M^*},~\ \dfrac{g_L^*}{q_L^*}=\dfrac{g_M^*}{q_M^*}, ~\ \dfrac{f_L^* b_L^* +g_L^* q_L^*}{\left(g_L^* q_L^*\right)^{\frac 14}}=\dfrac{f_M^* b_M^* +g_M^* q_M^*}{\left(g_M^* q_M^*\right)^{\frac 14}}.
\end{align}
Since the $4$th characteristic field is a Temple field, the shock curve and the rarefaction curve coincide, and we have from \eqref{eq: 24b} and \eqref{R4}
\begin{align}\label{eq: 5.2S}
f_R^*=f_R,~b_R^*=b_R,~g_R^*/q_R^*=g_R/q_R.
\end{align}
Combining \eqref{eq: 5.2J}-\eqref{eq: 5.2S}, we observe that the intermediate states 
$\mathbf{U_L^*}, f^*_M, b^*_M, q^*_M, \mathbf{U_R^*} $ can be expressed in terms of the Riemann initial data and $g^*_M$ by the set of equations 
\begin{equation}\label{case 1 relations}
\begin{array}{c}
(f_L^*,~b_L^*,~g_L^*,~q_L^*)=\left(\sqrt{\dfrac{f_Lb_Lf_R}{b_R}}, \sqrt{\dfrac{f_Lb_Lb_R}{f_R}}, g_L, q_L\right),\\
    f_M^* = f_R,  \quad  b_M^* = b_R, \quad  q_M^*=g_M^* \dfrac{q_L}{g_L}, \\
    (f_R^*,~b_R^*,~g_R^*,~q_R^*)=\left(f_R,~ b_R, ~g_M^*\sqrt{\dfrac{q_Lg_R}{g_Lq_R}},~g_M^* \sqrt{\dfrac{q_Lq_R}{g_Lg_R}}\right).%\label{case 1R}.
\end{array}
\end{equation}
Using the last relation for the $2$-rarefaction wave $R_2$ from \eqref{eq: 5.2R}, the state $g_M^*$ is  obtained by solving  in both cases the equation  $F_1(g_M^*) =0$ with $F_1$ given by 
\begin{align}\label{eq: 5.2}
  F_1(g_M^*):= (g^*_M)^2 q_L-\sqrt{g_M^* g_L}(f_L b_L+g_Lq_L)+f_R b_Rg_L.
\end{align}
In view of the initial data condition $f_Rb_R\geq f_Lb_L$ and $f_Rb_R<g_Lq_L<3g_Lq_L$ for $\mathbf{U_{L/R}}\in \mathcal{U}$, we have $F_1(g_L)>0$ and $F
_1(g_{\rm{min}})<0$ for $g_{\rm{min}}=\left(\dfrac{\sqrt{g_L}(f_Lb_L+g_Lq_L)}{4q_L}\right)^{2/3}$. 
Therefore, a positive zero $g_M^*$ of $F_1$ exists in  either the interval $[g_L, g_{\rm{min}})$ or  in $[g_{\rm{min}}, g_L]$.

It is still not clear whether the fourth wave always corresponds to either a $4$-shock wave satisfying the entropy condition 
\begin{align}\label{4Srelations}
g_M^*q_M^*=g_R^*q_R^*>g_Rq_R,
\end{align}
or a  $4$-rarefaction wave, supposed to satisfy
\begin{align}
g_M^*q_M^*=g_R^*q_R^*\leq g_Rq_R.
\end{align}\
In any case, we have trivially either $g_M^*>\sqrt{\dfrac{g_Lg_Rq_R}{q_L}}  \text{ or } g_M^*\leq \sqrt{\dfrac{g_Lg_Rq_R}{q_L}}.$

Consider the case when $g_M^*>\sqrt{(g_Lg_Rq_R)/q_L}$. Squaring both sides and using  $q_M^*=(g_M^* q_L)/g_L$ from $\text{\eqref{case 1 relations}}_2$, we have $g_M^*q_M^*> g_Rq_R$. Clearly, given \eqref{4Srelations}, this implies that the fourth wave in this case is a $4$-shock wave. Analogously, we prove that the fourth wave is a $4$-rarefaction wave when  $g_M^*\leq \sqrt{(g_Lg_Rq_R)/q_L}$.

We conclude that the Riemann problem for Case 1 can be solved by either the solution structure 
$J_1+R_2+J_3+S_4$ or by $J_1+R_2+J_3+R_4$. 
\begin{figure}
\captionsetup[subfigure]{font=scriptsize,labelfont=scriptsize}
\centering
\begin{subfigure}{0.5\textwidth}
\centering
  % Requires \usepackage{graphicx}
  \hspace{-1 cm}\includegraphics[width=2.4 in]{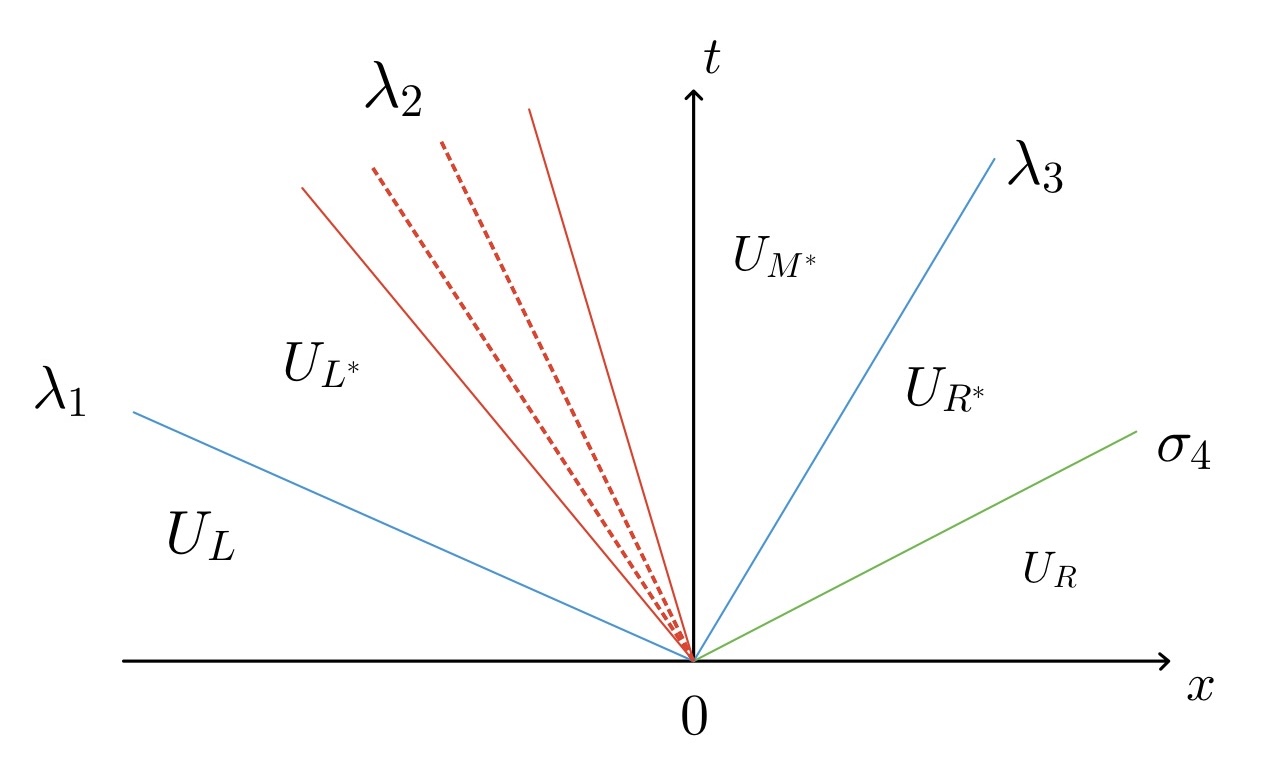}
  \caption{Typical wave configuration for\\
  Case 1(a). $J_1+R_2+J_3+S_4$.}\label{RP-pic}
\end{subfigure}%
\begin{subfigure}{0.5\textwidth}
\centering
  % Requires \usepackage{graphicx}
  \hspace{-1 cm}\includegraphics[width=2.4 in]{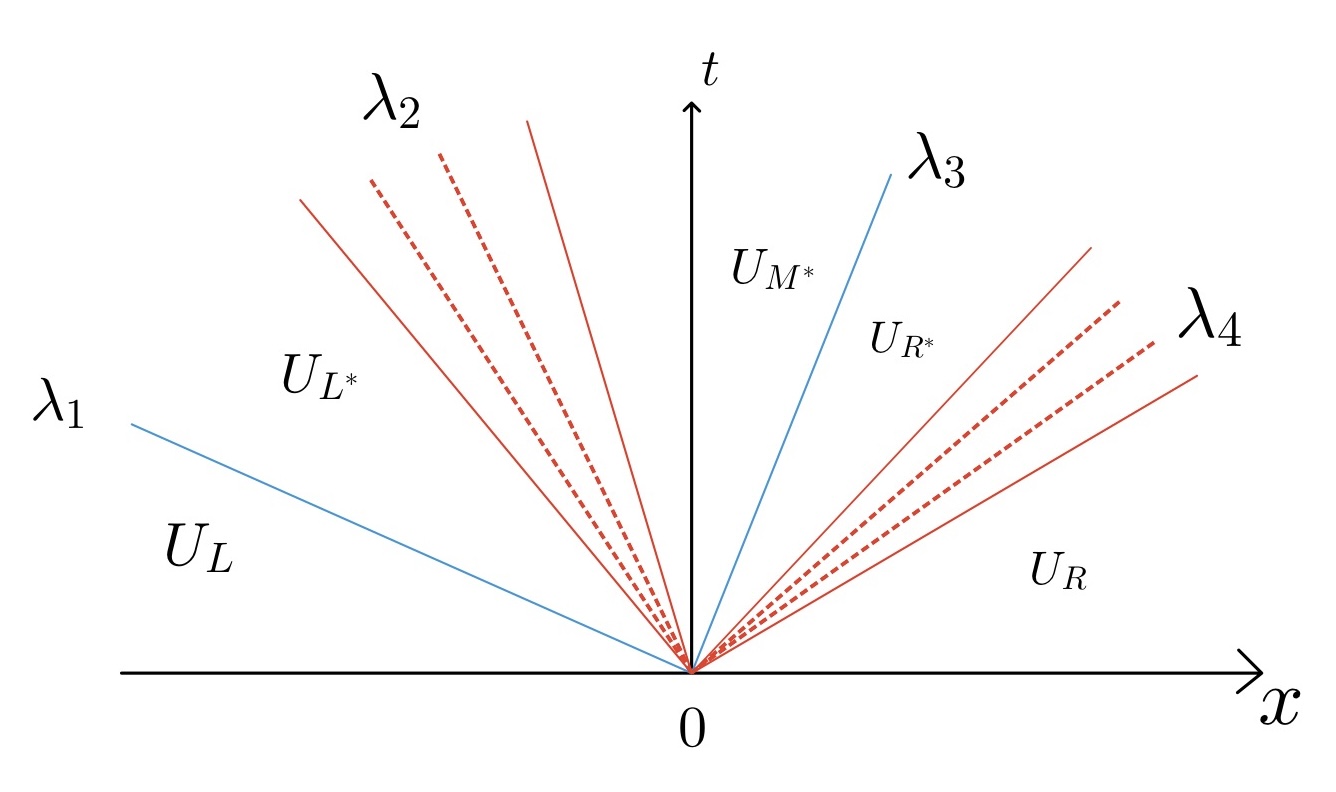}\\
  \caption{Typical wave configuration for\\
  Case 1(b). $J_1+R_2+J_3+R_4$.}\label{Case 1(b)}
\end{subfigure}%
\end{figure}
\subsubsection{Case 2}\label{subsubsec: 5.1.2}
Let us now consider the elementary wave curves from Section \ref{subsec: 4.1} for the proposed configuration $J_1+S_2+J_3+S_4$  (see \ref{RP-pic2}) or $J_1+S_2+J_3+R_4$ (see \ref{fig: case1(b)}). 
\begin{figure}
\captionsetup[subfigure]{font=scriptsize,labelfont=scriptsize}
\begin{subfigure}{0.5\textwidth}
  \centering
  \hspace{-1 cm}\includegraphics[width=2.2 in]{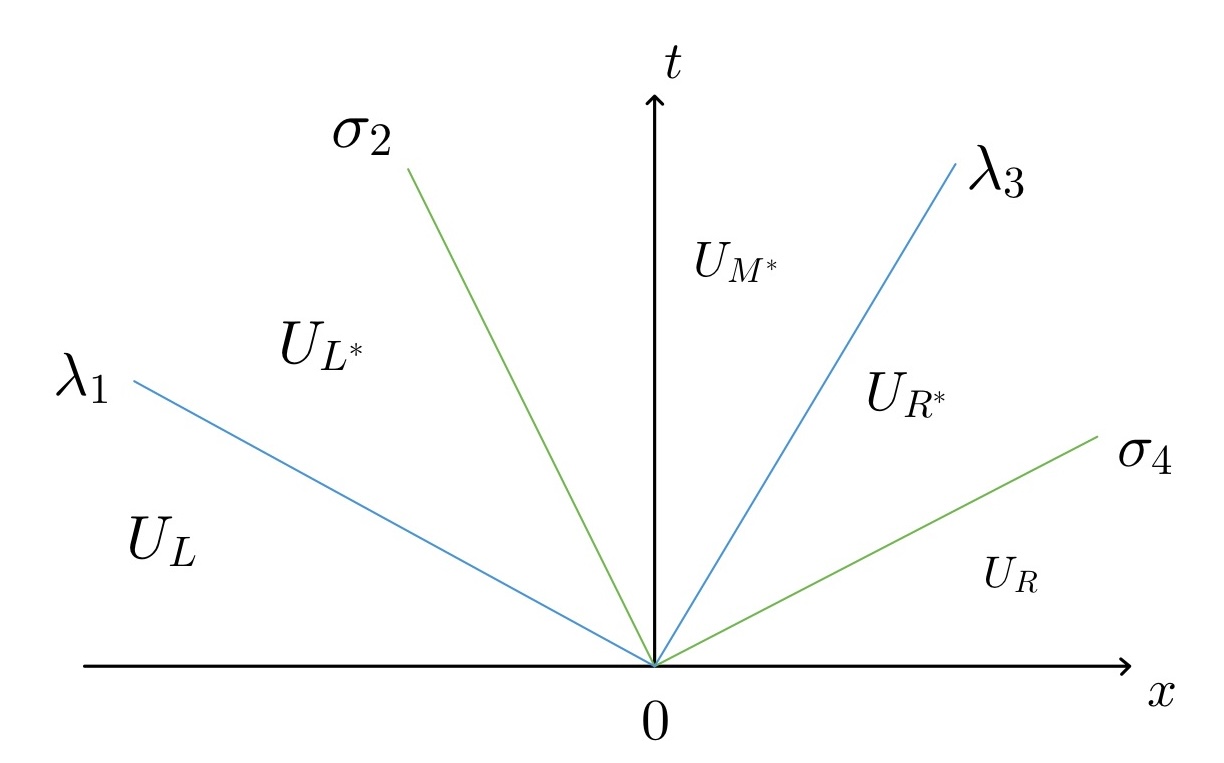}
  \caption{Typical wave configuration for\\
  Case 2(a). $J_1+S_2+J_3+S_4$.}
 \label{RP-pic2}
 \end{subfigure}%
 \begin{subfigure}{0.5\textwidth}
 \centering
\hspace{-1 cm}\includegraphics[width=2.2 in]{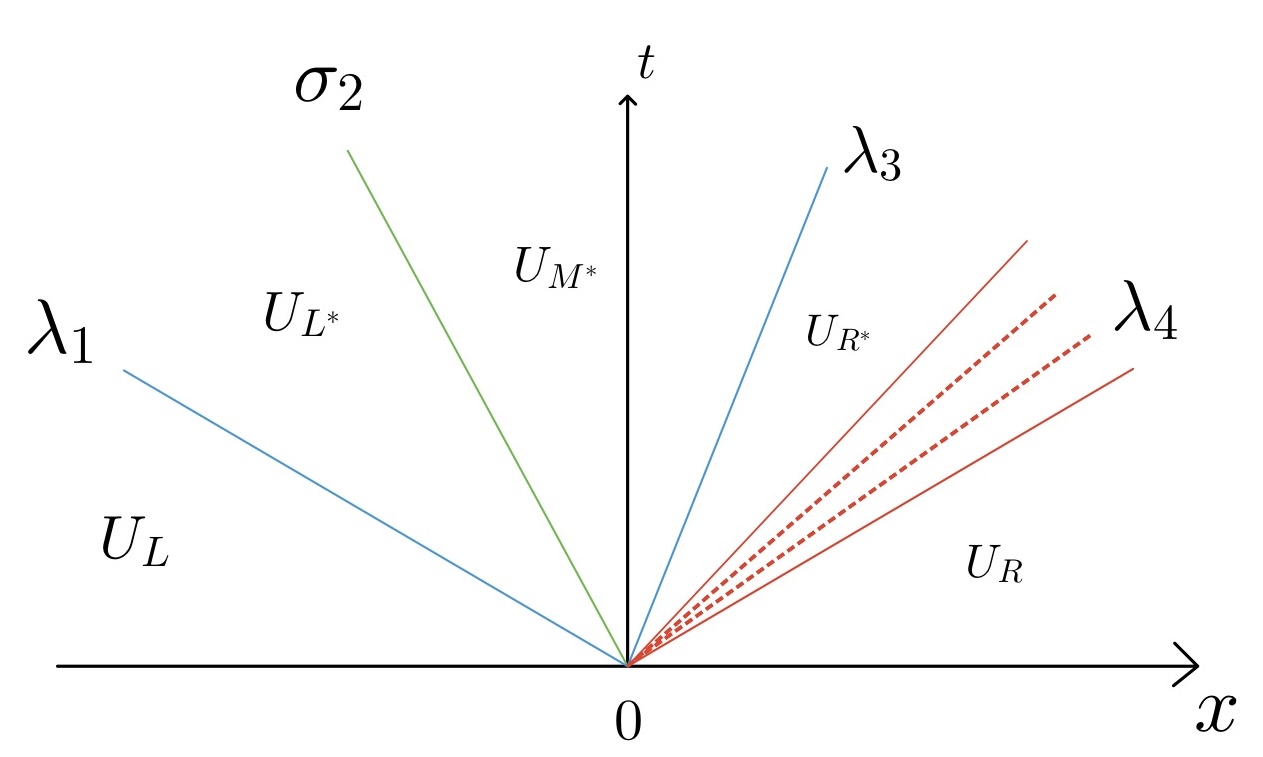}
  \caption{Typical wave configuration for\\
  Case 2(b). $J_1+S_2+J_3+R_4$.}
 \label{fig: case1(b)}
 \end{subfigure}
\end{figure}

For the first and third characteristic fields, we have contact discontinuities, and therefore we deduce from \eqref{eq: 23a}, \eqref{eq: 23b} the relations 
\begin{equation}\label{eq: 5.3J}
 g^*_L=g_L,\ q^*_L=q_L,\ f_Lb_L=f^*_Lb^*_L,\
 f^*_R=f_M^*,\ b^*_R=b_M^*,\  g_M^*q_M^*=g^*_Rq^*_R.
\end{equation}
From \eqref{S_2def}, we deduce for the $2$-shock wave 
\begin{align}\label{eq: 5.3S} 
&\hspace{2.9 cm} \ f_L^*/b_L^*=f_M^*/b_M^*,~\ g_L^*/q_L^*=g_M^*/q_M^*, ~\ \\
&2\sigma_2(\mathbf{U}_L^*, \mathbf{U}_M^*)(g_M^*-g_L^*)=(g_M^*)^2 q_M^*-(g_L^*)^2 q_L^*+2(f_M^*b_M^*g_M^*-f_L^*b_L^*g_L^*).
\end{align}
The relations for the 4-shock wave or 4-rarefaction wave from \eqref{eq: 24b} and \eqref{R4} imply
\begin{align}\label{eq: 5.3s}  
f_R^*=f_R,~b_R^*=b_R,~g_R^*/q_R^*=g_R/q_R.
\end{align}
Combining \eqref{eq: 5.3J}-\eqref{eq: 5.3s}, we observe that the intermediate states 
$\mathbf{U_L^*}, f^*_M, b^*_M, q^*_M, \mathbf{U_R^*} $ can be expressed in terms of the Riemann initial data and $g^*_M$ by the set of equations 
\begin{align*}
(f_L^*,~b_L^*,~g_L^*,~q_L^*)&=\left(\sqrt{\dfrac{f_Lb_Lf_R}{b_R}}, \sqrt{\dfrac{f_Lb_Lb_R}{f_R}}, g_L, q_L\right), \\
    f_M^* = f_R, & \quad  b_M^* = b_R, \quad  q_M^*=g_M^* \dfrac{q_L}{g_L},  \\
    (f_R^*,~b_R^*,~g_R^*,~q_R^*)&=\left(f_R,~ b_R, ~g_M^*\sqrt{\dfrac{q_Lg_R}{g_Lq_R}},~g_M^* \sqrt{\dfrac{q_Lq_R}{g_Lg_R}}\right).
\end{align*}
The unknown state $g_M^*$ is obtained by solving the nonlinear equation 
\begin{align}\label{eq: 5.3}
   F_2(g_M^*)&:= q_L(g_M^*)^3+g_M^* g_L(f_Rb_R-f_Lb_L-\sqrt{f_L b_L f_R b_R})\nonumber\\
   &\hspace{1 cm}-g_L^2(g_Lq_L+f_Lb_L-f_Rb_R-\sqrt{f_L b_L f_R b_R})=0.
\end{align}
The initial data condition $f_Rb_R<f_Lb_L$ implies $F_2(g_L)<0$ and $F_2(\infty)>0$. Therefore, the equation \eqref{eq: 5.3} always possesses a positive root in the interval $(g_L, \infty)$. Moreover, similar to the previous case, based on the values of $g_M^*$, we have the following two subcases:
\begin{enumerate}
    \item If $g_M^*>\sqrt{\dfrac{g_Lg_Rq_R}{q_L}}$ then the solution structure is $J_1+S_2+J_3+S_4$.
    \item If $g_M^*\leq \sqrt{\dfrac{g_Lg_Rq_R}{q_L}}$ then the solution structure is $J_1+S_2+J_3+R_4$.
\end{enumerate}
\begin{remark}
Here,  we consider generic $4$-wave solutions of the Riemann problem.  They include  $2$-wave solutions if we restrict ourselves to, e.g.,  one-layer dynamics. These persist 
under arbitrary variations of initial data in the remaining layer.   Solutions that consist of one (rarefaction) wave only are discussed in \cite{barthwal2025generalized}.      
\end{remark}
\section{Numerical experiments}\label{sec: 6}
In the last section, we derived the exact solution to the Riemann problem. We now proceed to some numerical experiments for particular test cases. We first consider the Riemann problems from Section \ref{subsubsec: 5.1.1} and \ref{subsubsec: 5.1.2}. Further, to gain more insights towards the solution to a general Cauchy problem for system \eqref{eq: Main_system}, we consider smooth initial data as well. In all our test cases for the Riemann problem, the computational domain is chosen to be $[-2, 12]$ with the final time of simulation as $t=1.00$. We impose zero Neumann boundary conditions. In particular, this choice ensures that no waves enter the computational domain through the boundaries.  We compare the exact solutions with solutions obtained using the Godunov and the Lax-Friedrichs scheme. For the smooth initial data, we take the interval $[-4,4]$ and periodic boundary conditions. 
%with transmissive boundary conditions. \hl{Transmissive boundary conditions (BCs) are boundary conditions such that}
%\[
%\partial_x \mathbf{U}|_{\partial D}=0.
%\]
\subsection{The Riemann problem for Case 1}
%%%%%%%%%%%

%%%%%%%%%%%%%%%%%%%

\begin{figure}
\centering
  \hspace{-0.7 cm}\includegraphics[width=4.5 in]{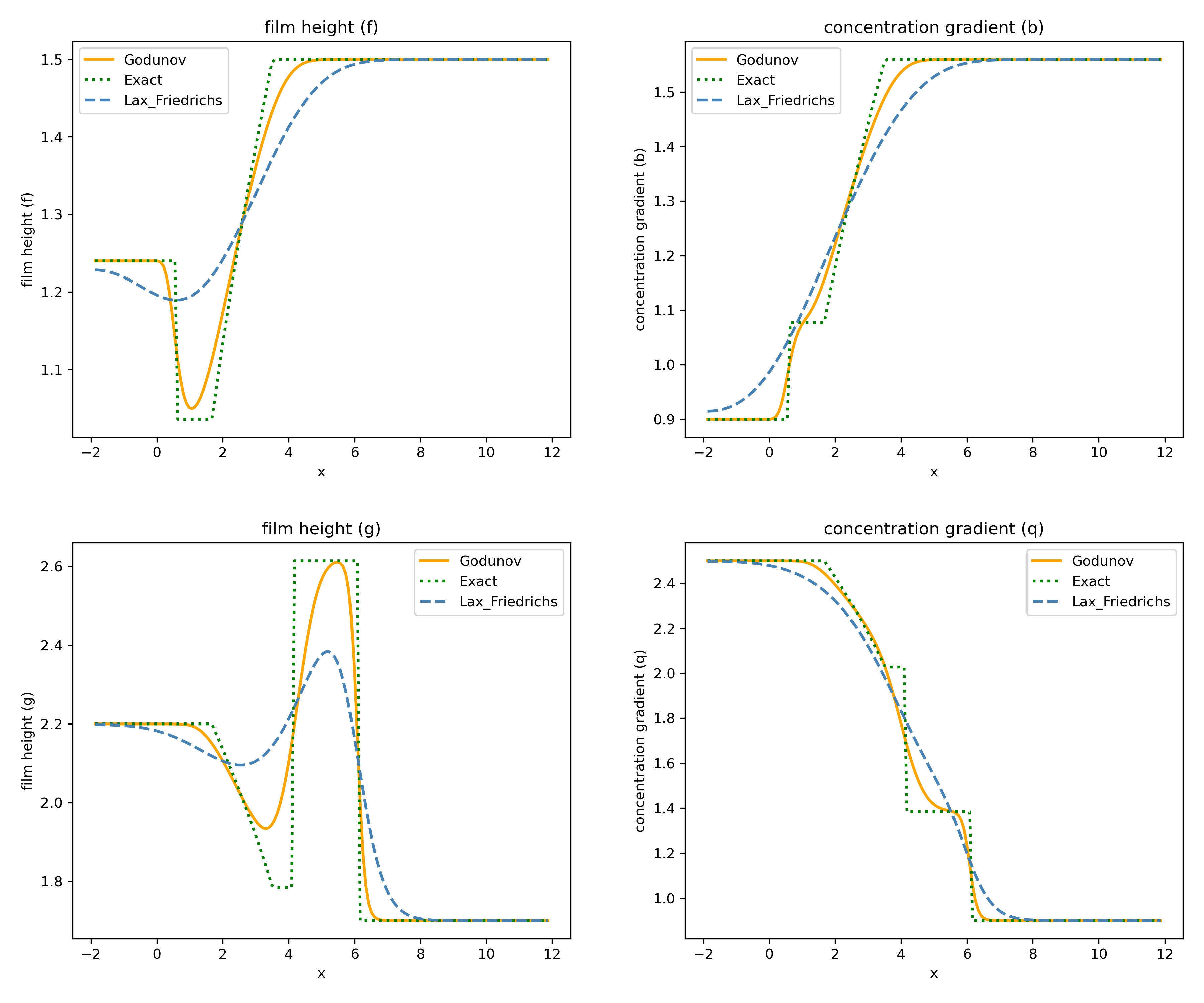}
  \caption{Case 1. Comparison 
  of numerical results for $\Delta x=8.75\times 10^{-2}$ at $t=1.00$.}
 \label{fig: case1_1}
 \end{figure}
 \begin{figure}
\centering
\hspace{-0.7 cm}\includegraphics[width=4.5 in]{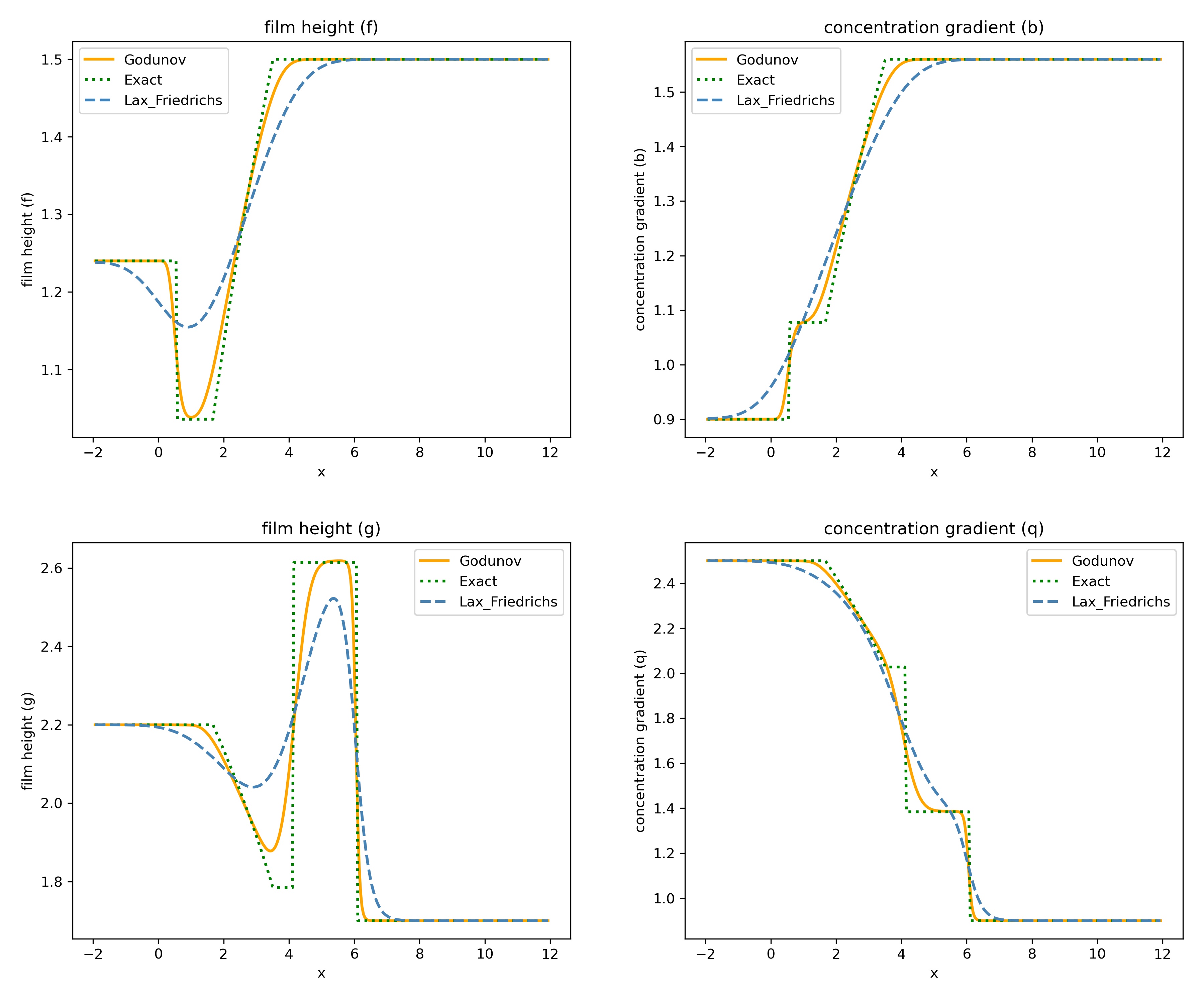}
  \caption{Case 1. Comparison of numerical results for $\Delta x=4.37\times 10^{-2}$ at $t=1.00$.}
  %\label{fig: case1_2}
\end{figure}
\begin{figure}
\centering
  \hspace{-0.5 cm}\includegraphics[width=4.5 in]{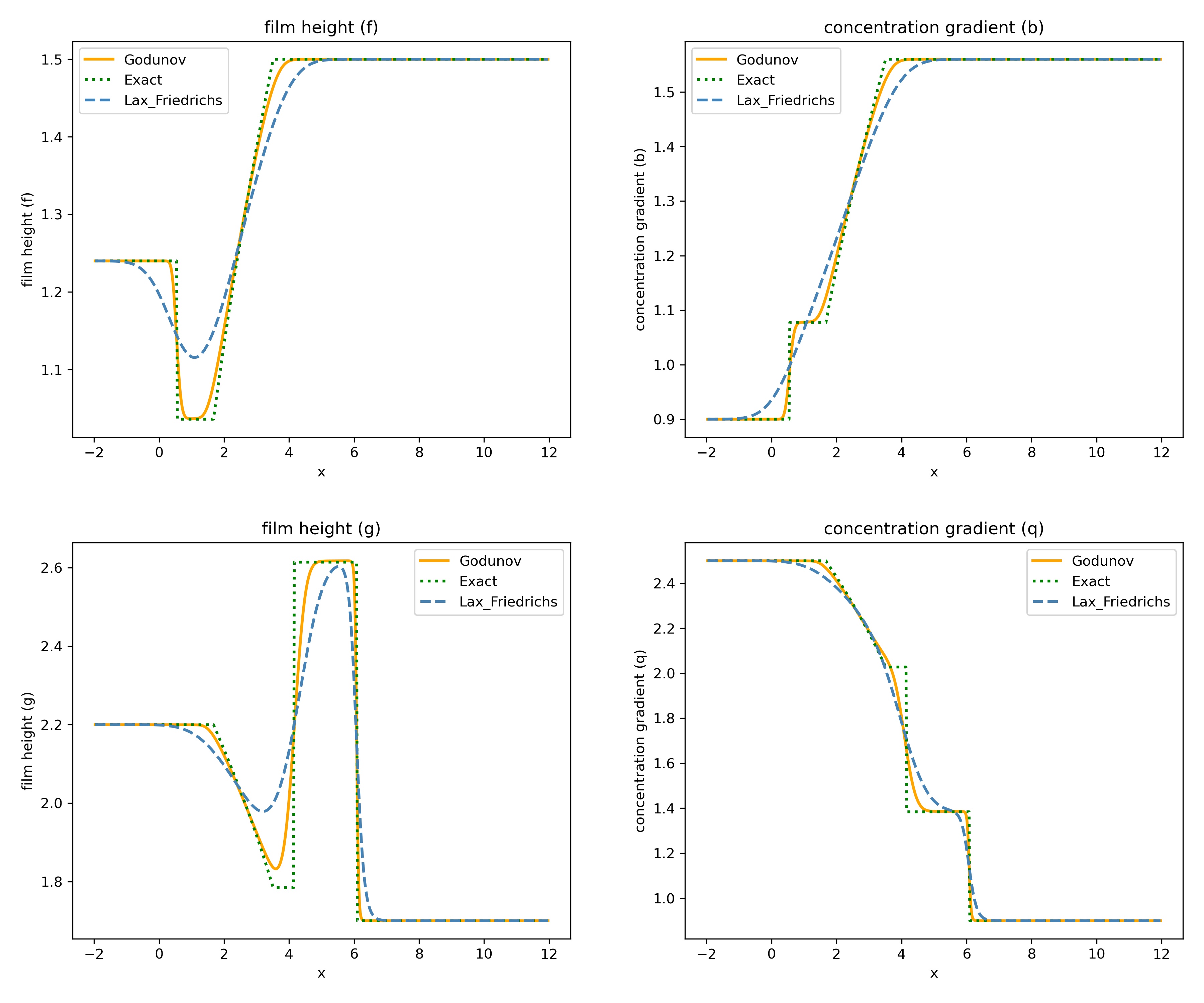}
  \caption{Case 1. Comparison
  of numerical results for $\Delta x=2.18\times 10^{-2}$ at $t=1.00$.}
  \label{fig: case1_2}
\end{figure}
For this test case, we choose our initial data of the form
\begin{align}
    \mathbf{U}(x, 0)= \left\{\begin{array}{rcl}
    (1.24, 0.90, 2.2, 2.50)^\top&: &~x<0, \\
    (1.5, 1.56, 1.7, 0.90)^\top &: &~x>0. 
    \end{array} \right.
\end{align}
Once the root of the nonlinear algebraic equation  \eqref{eq: 5.2} is obtained, one can produce the exact Riemann solution based on the relations discussed in Section \ref{sec: 5}. The solution structure for this case is given by $J_1+R_2+J_3+S_4$, see \ref{RP-pic}. We compare our solutions with the results of the  Lax-Friedrichs scheme. We choose the grid sizes to be $\Delta x= %3.5\times 10^{-1},~ 1.75 \times 10^{-1},~%
8.75\times 10^{-2},~4.37\times 10^{-2},$ and $ 2.18 \times 10^{-2}$. The numerical results suggest that for finer grid sizes, the Godunov scheme approximates the exact solution to the Riemann problem really well, while the Lax-Friedrichs scheme remains more diffusive, see Figures \ref{fig: case1_1}-\ref{fig: case1_2}. This is a typical behaviour since it does not rely on the exact Riemann solver like the Godunov scheme. Asymptotic convergence of the Godunov scheme and the Lax-Friedrichs scheme can also be observed from the numerical results.
\subsection{A shock-tube type initial data}
In this example, we consider the following initial data
\begin{align}
    \mathbf{U}(x, 0)= \left\{\begin{array}{rcl}
    (1.5, 1.6, 1.6, 2.00)^\top&: &~x<0, \\
    (1.25, 1.15, 2.0, 2.1)^\top &: &~x>0. 
    \end{array} \right.
\end{align}
The solution structure for this case is given by $J_1+S_2+J_3+S_4$ and consists only of discontinuities. We plot the numerical results for the grid size $\Delta x= %3.5\times 10^{-1},~ 1.75 \times 10^{-1},~%
4.37\times 10^{-2}$ in Figure \ref{fig: J+S+J}. One can observe that the Godunov solver captures the discontinuities much better.
\begin{figure}
    \centering
    \includegraphics[width=4.5 in]{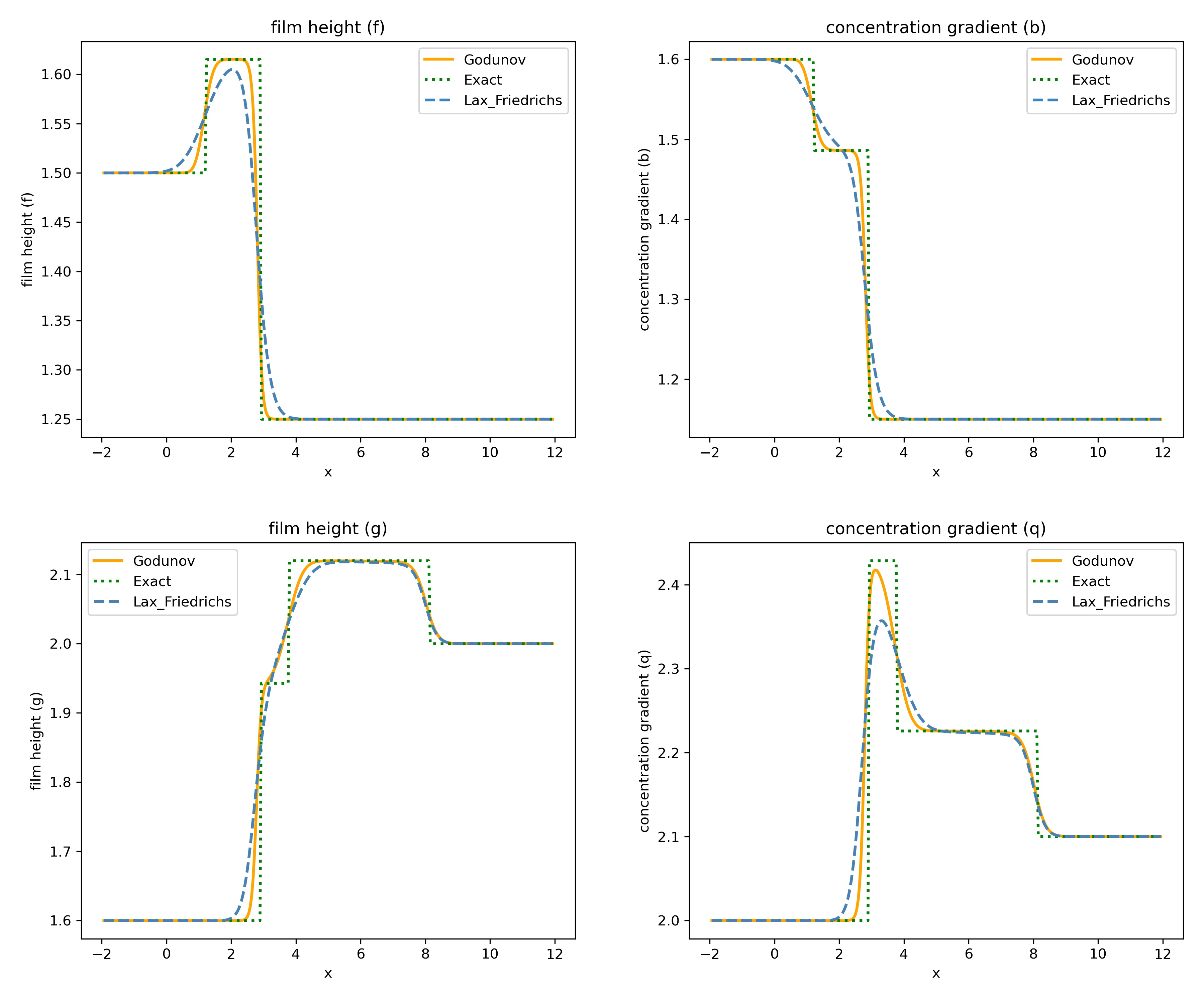}
    \caption{Comparison
  of numerical results for a shock tube type initial data with $\Delta x=2.18\times 10^{-2}$ at $t=1.00$.}
    \label{fig: J+S+J}
\end{figure}
\subsection{Solution for a smooth initial profile}
Till now, we have developed solutions to the Riemann problem for system \eqref{eq: Main_system} to validate our solver. Now we employ it to compute a numerical solution to the Cauchy problem for system \eqref{eq: Main_system} with smooth initial data of the form 
\begin{align*}
    f(x, 0)=g(x, 0)=0.2,~b(x, 0)=1.51+\kappa e^{-(x-0.05)^2},~q(x, 0)= 0.2+10e^{-(x-0.05)^2},
\end{align*}
where $\kappa$ is a parameter. 
%%%%%%%%%%%%%%%%%%%%
The computational domain is $[-4, 4]$ and periodic boundary conditions are used here. The choice of this initial data suggests that initially, the concentration gradients in each layer follow a normal distribution, and the heights of the two thin films are constant. For this choice of initial data, no explicit exact solution is a priori known. However, we use the Godunov scheme to gain some insights into the evolution.  We vary $\kappa$ values as $20, \,30, \,40$ and $50$ and plot the numerical results at time $t=1.5$ with $\Delta x= 1.25\times 10^{-2}$ in Figure \ref{fig:sub2} to understand the effect of concentration gradients on film heights. We also provide a 3-D plot of the evolution for each variable using the Godunov scheme with $\Delta x= 2.5\times 10^{-2}$ and $\kappa=50$ in Figure \ref{fig: 3-D plot} at $t=1.5$. In the simulations, one can observe the emergence of sharp singularities (shock waves) in the evolution of the film heights as $\kappa$ increases.  Moreover, the inverse relation of height vs.$\!$ concentration gradient post shock is worth noticing. The concentration gradients decrease in the domains where the film heights increase and vice versa.  We refer also to the   discussion on the physical interpretation of the entropy $\bar E$ in Remark \ref{rem42}. 
The decrease of the heights can be observed especially in the evolution of the film height of the first layer.  The film height decreases  with bigger values of $\kappa$, which corresponds to high concentration gradients of the solute.  But a film rupture  in finite time is not observed for the system \eqref{eq: Main_system}.\\
This behaviour is also reflected in the Riemann solutions constructed in Section \ref{sec: Riemann}. In particular, the intermediate state $f_L^* \in \mathcal U$ for the film height in the first layer satisfies $f_L^*=\sqrt{({f_Lb_Lf_R})/{b_R}}$. This implies that $f_L^*\rightarrow 0$ as $b_R\rightarrow \infty$. However, this leads us to the boundary of the state space, where the system does not remain strictly hyperbolic.  To resolve this issue and to get rupture in finite time, one may need to consider measure-valued $\delta$-shock waves. Rupture can not be achieved within the class of classical waves. For the one-layer case, a more detailed analysis for this critical case using  $\delta$-shock waves is available; see e.g. \cite{barthwal2023construction, barthwal2025existence, sen2020delta}, and references cited therein.  
\begin{figure}
\centering
   \hspace{-0.8 cm} \includegraphics[width=5.5 in]{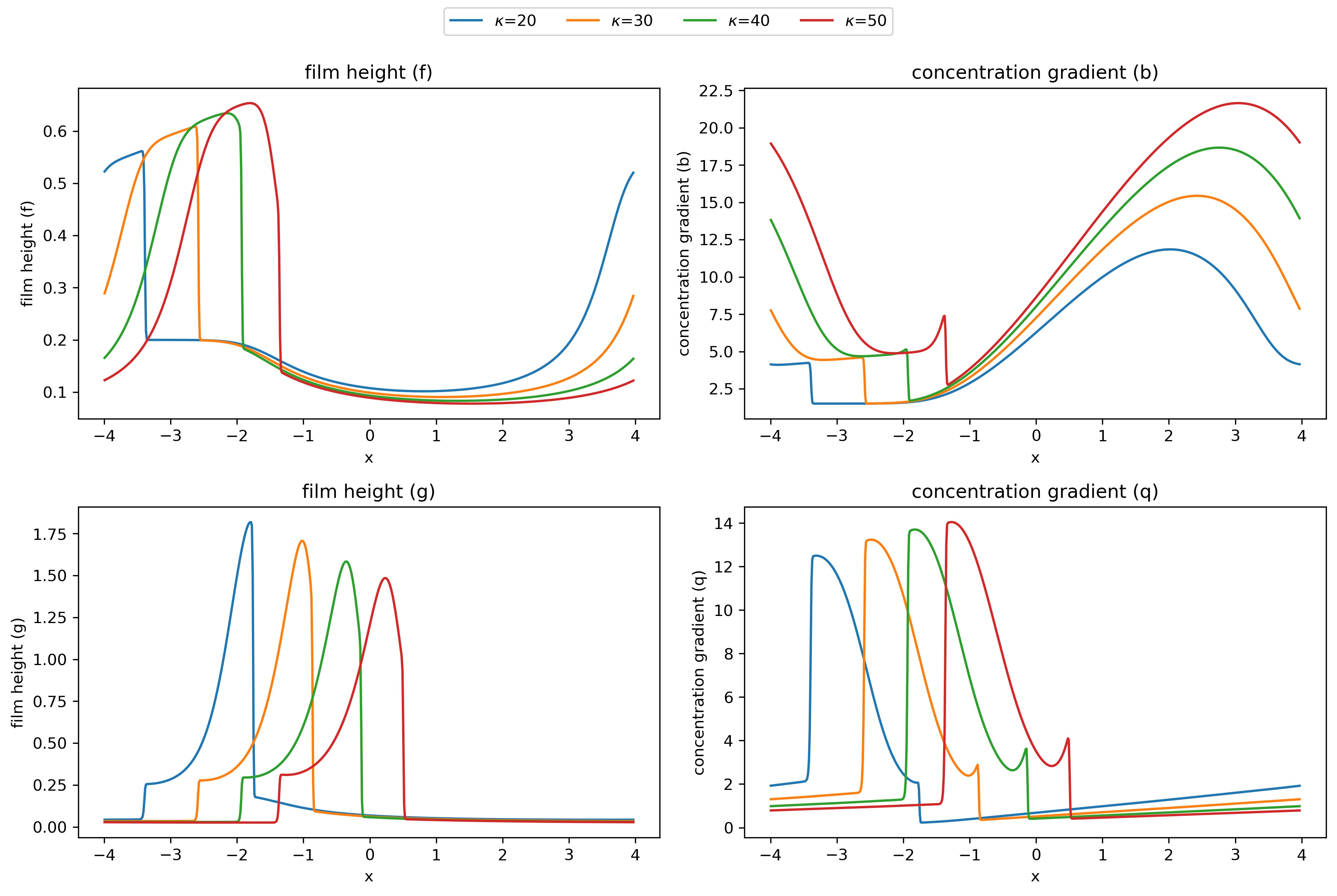}
  \caption{Numerical results 
  at $t=1.50$ using the Godunov scheme with $\Delta x=1.25\times 10^{-2}$.}
  \label{fig:sub2}
\end{figure}%    
\begin{figure}
\centering
   \hspace{-0.5 cm}  \includegraphics[width=5.5 in]{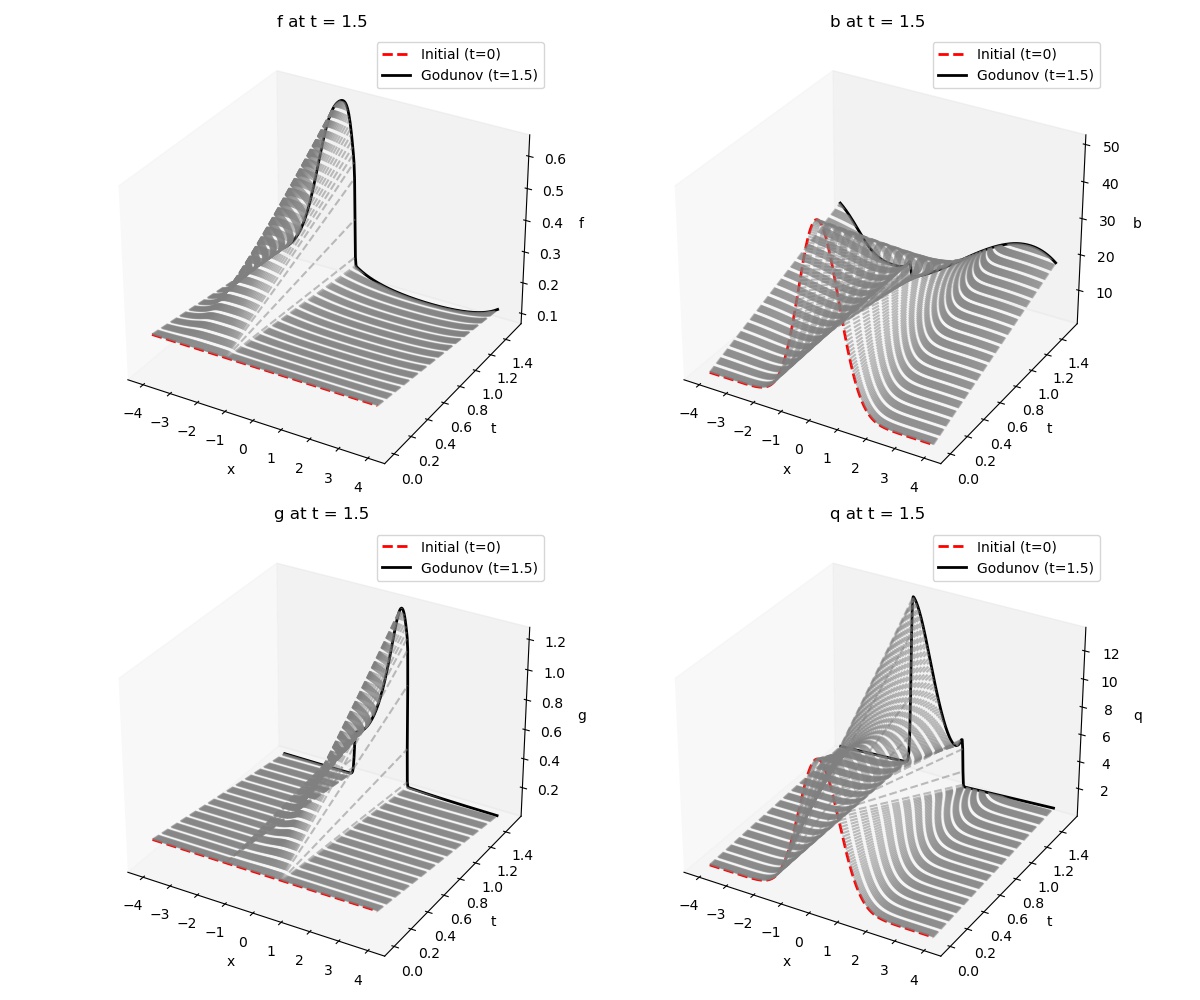}
    \caption{
    Solution profile using the Godunov scheme with $\Delta x= 2.5\times 10^{-2}$ at $t=1.5$.}
    \label{fig: 3-D plot}
\end{figure}

%%%%%%%%%%%%%%%%%%%

%%%%%%%%%%%
\section{Conclusions}\label{sec: 7}
In this work, we derived a reduced system of hyperbolic conservation laws governing the evolution of a two-layer thin film with a perfectly soluble anti-surfactant solute. The obtained system not only models surface-tension driven flow of a two-layer film but also exhibits a rich mathematical structure relevant to the theory of hyperbolic conservation laws. A crucial outcome of this work is the identification of a novel set of entropy/entropy-flux pairs, a set of Riemann invariants that forms a coordinate system, and an exact Riemann solver as the foundation for finite volume solvers. We emphasize that the results presented here were derived entirely within the  framework of classical waves for  hyperbolic conservation laws (shock, rarefaction, contact). This naturally raises the question of how the solution structure may evolve in more complex settings, particularly in regimes  on the boundary of the state space where strict hyperbolicity is lost. The analysis of such scenarios, which may involve intricate wave interactions and structural changes in the solution, is left for further research.

Moreover, our study opens several other new research directions. Extending the model to multi-layer thin film flows is possible. It would be interesting to see how the special mathematical structure we found for the two-layer case is recovered. The same applies if one accounts for gravity and flow on inclined surfaces \cite{barthwal2025existence}. Finally, we think that the found entropy/entropy-flux pairs are important to derive appropriate energy estimates for higher-order lubrication models, including 
capillarity and diffusion effects.
The development of entropy-stable numerical schemes, particularly high-order finite volume or discontinuous Galerkin methods, is a natural next step. Moreover, leveraging exact or approximate Riemann solvers, including Generalized Riemann solvers \cite{barthwal2025generalized}, would enhance the robustness of numerical approximations.
\medskip 
\vspace{-0.2 cm}\\\\
\textbf{Acknowledgements}~  This work was financially supported by the German Research Foundation (DFG), within the  Collaborative Research Center on Interface-Driven Multi-Field Processes in Porous Media (SFB 1313, Project Number 327154368) and the Priority Programme—SPP 2410 Hyperbolic Balance Laws in Fluid Mechanics: Complexity, Scales, Randomness (CoScaRa). The authors gratefully acknowledge the anonymous referee for their insightful suggestions and constructive feedback, which helped to enhance the presentation of the manuscript.\\
\appendix
\section{Derivation of velocity profiles}\label{sec: appendix}
To obtain the velocity components in each layer, we first solve the reduced system \eqref{eq: 2.3a}-\eqref{eq: 2.3e} with dimensionless boundary conditions \eqref{eq: 2.4a}-\eqref{eq: 2.4k2}. Now from \eqref{eq: 2.3d} and the normal stress balance \eqref{eq: 2.4e}-\eqref{eq: 2.4f}, it is easy to see that 
\begin{align*}
p_2=-\dfrac{1}{\mathrm{Ca_2}}\left(\dfrac{\partial^2 (f+g)}{\partial x^2}\right),~ \text{and}\quad p_1=-\dfrac{\mu}{\mathrm{Ca_2}}\left(\dfrac{\partial^2 (f+g)}{\partial x^2}\right)-\dfrac{1}{\mathrm{Ca_1}}\left(\dfrac{\partial^2 f}{\partial x^2}\right)
\end{align*}
hold. Hence, by \eqref{eq: 2.3b}, we have
\begin{align*}
    \dfrac{\partial u_1}{\partial z}=\bigg[-\dfrac{\mu}{\mathrm{Ca_2}}\left(\dfrac{\partial^3 (f+g)}{\partial x^3}\right)-\dfrac{1}{\mathrm{Ca_1}}\left(\dfrac{\partial^3 f}{\partial x^3}\right)\bigg]z+a_1. 
\end{align*}
Using the no-slip condition \eqref{eq: 2.4a}, we get
\begin{align}\label{eq: 2.22mainb}
   u_1=\bigg[-\dfrac{\mu}{\mathrm{Ca_2}}\left(\dfrac{\partial^3 (f+g)}{\partial x^3}\right)-\dfrac{1}{\mathrm{Ca_1}}\left(\dfrac{\partial^3 f}{\partial x^3}\right)\bigg]\dfrac{z^2}{2}+a_1z. 
\end{align}
Now from the tangential stress balance \eqref{eq: 2.4g}, we obtain
\begin{align*}
    a_1= \mu \dfrac{\partial u_2}{\partial z}\bigg|_{z=f}+\mathrm{Ma_1} \dfrac{\partial c_1}{\partial x}+\bigg[\dfrac{\mu}{\mathrm{Ca_2}}\left(\dfrac{\partial^3 (f+g)}{\partial x^3}\right)+\dfrac{1}{\mathrm{Ca_1}}\left(\dfrac{\partial^3 f}{\partial x^3}\right)\bigg]f.
    \end{align*}
From \eqref{eq: 2.3b}, we have $$\dfrac{\partial u_2}{\partial z}=-\dfrac{1}{\mathrm{Ca_2}}\left(\dfrac{\partial^3 (f+g)}{\partial x^3}\right)z+a_2.$$
Thus, using the tangential stress balance, we get
$$a_2= -\mathrm{Ma_2}\left(\dfrac{\partial c_2}{\partial x}+\dfrac{\partial c_2}{\partial z}\dfrac{\partial (f+g)}{\partial x}\right)+\dfrac{1}{\mathrm{\mathrm{Ca_2}}}\left(\dfrac{\partial^3 (f+g)}{\partial x^3}\right)(f+g)$$
or 
\begin{align}\label{eq: 2.20u_2}
\dfrac{\partial u_2}{\partial z}=\dfrac{1}{\mathrm{Ca_2}}\left(\dfrac{\partial^3 (f+g)}{\partial x^3}\right)((f+g)-z)+\mathrm{Ma_2}\left(\dfrac{\partial c_2}{\partial x}+\dfrac{\partial c_2}{\partial z}\dfrac{\partial (f+g)}{\partial x}\right),
\end{align}
which implies 
\begin{align*}
    \dfrac{\partial u_2}{\partial z}\bigg|_{z=f}=\dfrac{1}{\mathrm{Ca_2}}\left(\dfrac{\partial^3(f+g)}{\partial x^3}\right)g+\mathrm{Ma_2}\left(\dfrac{\partial c_2}{\partial x}+\dfrac{\partial c_2}{\partial z}\dfrac{\partial (f+g)}{\partial x}\right). 
\end{align*}
Hence
\begin{align*}
    a_1&=\bigg[\dfrac{\mu}{\mathrm{Ca_2}}\left(\dfrac{\partial^3 (f+g)}{\partial x^3}\right)(f+g)+\dfrac{1}{\mathrm{Ca_1}}\left(\dfrac{\partial^3 f}{\partial x^3}\right)f\bigg]+\mu \mathrm{Ma_2}\left(\dfrac{\partial c_2}{\partial x}+\dfrac{\partial c_2}{\partial z}\dfrac{\partial (f+g)}{\partial x}\right)+\mathrm{Ma_1} \dfrac{\partial c_1}{\partial x}.
\end{align*}
Therefore, by \eqref{eq: 2.22mainb}, we obtain
$u_1$ as defined in \eqref{eq: 2.18main}.

The expression of $u_2$ in \eqref{eq: 2.6a_velocity_2} can be obtained in a similar manner.
\bibliographystyle{abbrv}
\bibliography{references}
\end{document}